\newcommand{\rightlim}{\mathop{\limto}}
\newcommand{\leftlim}{\mathop{\displaystyle\lim_{\longleftarrow}}}
\newcommand{\limfromn}{\leftlim\limits_{\raise3pt\hbox{$n$}}}
\newcommand{\limton}{\rightlim\limits_{\raise3pt\hbox{$n$}}}
\newcommand{\rightlimit}[1]{\mathop{\lim\limits_{\longrightarrow}}\limits%
                   _{\raise3pt\hbox{$\scriptstyle #1$}}}
\newcommand{\leftlimit}[1]{\mathop{\lim\limits_{\longleftarrow}}\limits%
                   _{\raise3pt\hbox{$\scriptstyle #1$}}}
\numberwithin{equation}{section}
\newcommand{\rar}[1]{\stackrel{#1}{\longrightarrow}}
\newcommand{\isom}{\rar{\simeq}}
\newcommand{\into}{\hookrightarrow}
\newcommand{\onto}{\twoheadrightarrow}
\newcommand{\eps}{\epsilon}
\newcommand{\cG}{{\mathcal G}}
\newcommand{\cO}{{\mathcal O}}
\newcommand{\sC}{{\mathscr C}}
\newcommand{\sG}{{\mathscr G}}
\newcommand{\fF}{{\mathfrak F}}
\newcommand{\fX}{{\mathfrak X}}
\newcommand{\fg}{{\mathfrak g}}
\newcommand{\on}{\operatorname}
\newcommand{\Ker}{\on{Ker}}
\newcommand{\Coker}{\on{Coker}}
\newcommand{\End}{\on{End}}
\newcommand{\Hom}{\on{Hom}}
\newcommand{\Spec}{\on{Spec}}
\newcommand{\id}{\on{id}}
\newcommand{\Ind}{\on{Ind}}
\renewcommand{\dot}{^{\bullet}}
\newtheorem{thm}{Theorem}[section]
\newtheorem{cor}[thm]{Corollary}
\newtheorem{lem}[thm]{Lemma}
\newtheorem{prop}[thm]{Proposition}
\newtheorem{prop-const}[thm]{Proposition-Construction}
\theoremstyle{remark}
\newtheorem{rem}[thm]{Remark}
\newcommand{\vph}{\varphi}
\newcommand{\Vect}{\on{Vect}}
\newcommand{\Sym}{\on{Sym}}
\newcommand{\Cone}{\on{Cone}}
\renewcommand{\mod}{\on{-mod}}
\newcommand{\colim}{\on{colim}}
\newcommand{\Adjoint}[4]{\xymatrix@1{#2 \ar@<.4ex>[r]^-{#1} & #3 \ar@<.4ex>[l]^-{#4}}}
\newcommand{\heart}{\heartsuit}
\newcommand{\Coh}{\on{Coh}}
\newcommand{\IndCoh}{\on{IndCoh}}
\newcommand{\QCoh}{\on{QCoh}}
\newcommand{\Sch}{\on{Sch}}
\date{\today}
\begin{document}

\begin{abstract}
We establish a criterion for sheaves on an adically complete DG scheme to be coherent.
We deduce a description of coherent sheaves on an adically complete lci singularity 
in terms of modules for a DG Lie algebra.
\end{abstract}

\author{Sam Raskin}

\title{Coherent sheaves on formal complete intersections via DG Lie algebras}
\maketitle

\setcounter{tocdepth}{1}
\tableofcontents


\section{Introduction}\label{s:introduction}


\subsection{} The aim for this note is to prove a coherence criterion for sheaves on an adically complete scheme.

We fix a ground field $k$ of characteristic zero for the remainder of this note.

\subsection{} Let us explain what the result says in the case of a complete intersection.

Let $Z$ be a complete intersection over $k$ given as a fiber product:
\[\xymatrix{
Z\ar[r]\ar[d] & X\ar[d]^f \\
\on{\Spec(k)}\ar[r]^{y} & Y
}\]
\noindent where $f$ is flat and $X$ and $Y$ are smooth schemes over $k$. Let $z\in Z$ be a closed point
and let $\widehat{Z}\in\on{Sch}$ be the adic completion of $Z$ along $z$.

Let $\Coh(\widehat{Z})$ denote the DG category of bounded complexes of coherent sheaves on $\widehat{Z}$ (its corresponding homotopy category is the triangulated
category $D^b(\Coh(\widehat{Z})^{\heart})$, the bounded derived category of finitely generated $\cO_{\widehat{Z}}$-modules). We will give an alternative
description of this DG category.

\subsection{} Recall that for $Z$ any scheme (over $k$) and $i_z:z\into Z$ a closed point, the shifted derived fiber $Li_z^*({}^RT_Z[-1])$ of the tangent complex ${}^RT_Z$
admits a canonical DG Lie algebra (aliases: $L_{\infty}$-algebra, homotopy Lie algebra) structure.

In the case of a complete intersection as above, the complex $\fg_{Z,z}$ can be computed very explicitly: it is just the two step complex in degrees $1$ and $2$:
\[
T_{X,z}\to f^*(T_{Y,y})
\]
\noindent where the differential is the derivative of $f$. While it is not so clear how to write a formula for the Lie bracket at the chain level in these terms, one can show that
at the level of cohomology the Lie bracket is the classical Hessian map:
\[
\on{Hess}(f):\Sym^2\Big(\Ker(T_{X,z}\to f^*(T_{Y,y}))\Big)\to\Coker(T_{X,z}\to f^*(T_{Y,y})).
\]
\noindent We refer to the appendix for this computation. (In fact, one can show that for a local complete intersection $Z$ as above, $\fg_{Z,z}$ is formal, so this computation
identifies $\fg_{Z,z}$ up to a non-canonical equivalence).

Let $\fg_{Z,z}\mod$ denote the DG category of modules for $\fg_{Z,z}$. Given a complex $M^{\dot}$ with the structure of $\fg_{Z,z}$-module, we can take 
its cohomology $H^*(M)$ to obtain a graded module for the graded Lie algebra $H^*(\fg_{Z,z})=H^1(\fg_{Z,z})\oplus H^2(\fg_{Z,z})$. Let
$\fg_{Z,z}\mod_{f.g.}$ denote the full DG subcategory of $\fg_{Z,z}\mod$ where we ask that $H^*(M)$ be finitely generated over
$H^*(\fg_{Z,z})$ (equivalently: as a $\Sym(H^2(\fg_{Z,z}))$-module).

\begin{thm}\label{t:qsm}
$\Coh(\widehat{Z})$ and $\fg_{Z,z}\mod_{f.g.}$ are canonically equivalent DG categories.
\end{thm}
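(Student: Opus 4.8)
The plan is to deduce Theorem~\ref{t:qsm} from the coherence criterion by way of Koszul duality. First I would present $\widehat{Z}$ as a pointed formal moduli problem: on an Artinian $k$-algebra $A$ it returns the space of maps $\Spec(A)\to Z$ carrying the closed point to $z$, so that $\widehat{Z}$ is the deformation problem $\on{Def}_z$. Its tangent complex is the derived fiber $Li_z^*({}^RT_Z)$, so by the equivalence between formal moduli problems and DG Lie algebras the object controlling $\widehat{Z}$ is precisely $\fg_{Z,z}=Li_z^*({}^RT_Z[-1])$. Koszul duality for formal moduli problems then supplies a canonical equivalence
\[
\IndCoh(\widehat{Z})\;\xrightarrow{\ \sim\ }\;\fg_{Z,z}\mod,\qquad \cF\longmapsto i_z^!\cF ,
\]
under which the skyscraper $i_{z,*}k$ is a compact generator whose derived endomorphism algebra is the enveloping algebra $U(\fg_{Z,z})$, the $U(\fg_{Z,z})$-action on $i_z^!\cF=\Hom(i_{z,*}k,\cF)$ being the tautological one; the structure sheaf $\cO_{\widehat{Z}}$ corresponds, up to shift, to the trivial module $k$.

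Granting this, the theorem becomes the question of which $\fg_{Z,z}$-modules come from coherent sheaves, and here I would invoke the coherence criterion established in the body of the paper. In this situation it reads: a sheaf on the adically complete $\widehat{Z}$ is coherent if and only if $\Ext^*(\cO_z,\cF)$ is finitely generated over the graded algebra $\Ext^*(\cO_z,\cO_z)$. Transporting through the equivalence, $\Ext^*(\cO_z,\cF)=H^*(i_z^!\cF)=H^*(M)$ and $\Ext^*(\cO_z,\cO_z)=H^*(U(\fg_{Z,z}))$, so $\cF$ is coherent exactly when $H^*(M)$ is finitely generated over $H^*(U(\fg_{Z,z}))$.

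It remains to identify this with the condition defining $\fg_{Z,z}\mod_{f.g.}$. Since $\fg_{Z,z}$ is formal we have $H^*(U(\fg_{Z,z}))=U(H^*(\fg_{Z,z}))$, and because $\fg_{Z,z}$ is concentrated in cohomological degrees $1$ and $2$ the Poincar\'e--Birkhoff--Witt theorem identifies this with $\Lambda(H^1(\fg_{Z,z}))\otimes\Sym(H^2(\fg_{Z,z}))$, the odd exterior factor being finite-dimensional. Hence $H^*(U(\fg_{Z,z}))$ is module-finite over $\Sym(H^2(\fg_{Z,z}))$, so $H^*(M)$ is finitely generated over the former if and only if it is finitely generated over the latter — exactly the defining condition of $\fg_{Z,z}\mod_{f.g.}$. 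Chasing this equivalence of finiteness conditions back through the Koszul-duality equivalence yields the identification $\Coh(\widehat{Z})\simeq\fg_{Z,z}\mod_{f.g.}$.

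The essential obstacle is the coherence criterion itself, on which everything rests. What makes it nontrivial is that $\Coh(\widehat{Z})$ is genuinely larger than the compact objects $\on{Perf}$ of $\IndCoh(\widehat{Z})$: already the trivial module $k$, the Koszul dual of $\cO_{\widehat{Z}}$, lies in $\fg_{Z,z}\mod_{f.g.}$ but typically has infinite projective dimension over $U(\fg_{Z,z})$, so one cannot simply match compacts with compacts. Establishing the criterion is a derived Nakayama statement that must detect coherence on the completed, singular $\widehat{Z}$ through finite generation over the degree-two cohomology operators; the key input is the finite-generation theory of cohomology over a complete intersection (Gulliksen, Eisenbud, Avramov--Buchweitz), encoded Lie-theoretically in the polynomial action of $\Sym(H^2(\fg_{Z,z}))$ dictated by the Hessian bracket. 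I expect the delicate direction to be the reconstruction of an honest coherent sheaf from a module whose cohomology is merely finitely generated over these operators.
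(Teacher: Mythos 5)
Your proposal has the right overall shape (Lurie's Koszul duality plus the coherence criterion plus a matching of finiteness conditions), but it contains a genuine gap at the crucial step, and one of its inputs is misstated. First, the equivalence you invoke is not correct as written: Koszul duality gives $\IndCoh(Z_z^{\wedge})\simeq\fg_{Z,z}\mod$ for the \emph{formal} completion $Z_z^{\wedge}$ (a prestack), not for the adic completion $\widehat{Z}$ (a scheme). The category $\IndCoh(\widehat{Z})$ is a different, ``renormalized'' object, and the content of Theorem \ref{t:qsm2} is precisely that $\IndCoh(\widehat{Z})\simeq\Ind(\fg_{Z,z}\mod_{f.g.})$, which is not $\fg_{Z,z}\mod$. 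What you actually need --- and what does hold --- is that $\Coh(\widehat{Z})$ lies in the right orthogonal to $\IndCoh(\widehat{Z}\setminus z)$ and hence embeds fully faithfully into $\fg_{Z,z}\mod$ via $i_z^!$; even that requires an argument (Lemma \ref{l:right-orth}).

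Second, and more seriously: the coherence criterion the paper proves (Proposition \ref{p:coherence}, Theorem \ref{t:indcoh}) is stated for a \emph{quasi-smooth} closed embedding $i:Y\to X$ and reads ``$\fF$ is coherent iff $\fF$ lies in the right orthogonal to $\IndCoh(X\setminus Y)$ and $i^!\fF\in\Coh(Y)$.'' The inclusion of the closed point $z\into\widehat{Z}$ is \emph{not} quasi-smooth unless $Z$ is smooth at $z$ (its relative cotangent complex has amplitude $[-2,-1]$), so the criterion cannot be applied with $Y=\{z\}$, and it certainly does not ``read'' as the assertion that $H^*(i_z^!\fF)$ be finitely generated over $\Ext^*(\cO_z,\cO_z)$ --- that assertion is essentially the theorem you are trying to prove, so your argument is circular at this point. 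The paper bridges the gap by choosing a quasi-smooth infinitesimal thickening $\sG=\Spec(k)\times_V\Spec(k)\into Z$ of the point (available after presenting $Z$ Zariski-locally as a global complete intersection), applying the criterion to $\vph:\sG\to\widehat{Z}$, identifying $\IndCoh(\sG)$ with $\Sym(T_{V,v}[-2])\mod$ so that $\vph^!\fF\in\Coh(\sG)$ becomes perfectness of $i_z^!\fF$ over $\Sym(T_{V,v}[-2])$, and then proving Lemma \ref{l:fg}, which converts perfectness over such an evenly-graded regular ring into finite generation of cohomology; the surjection $T_{V,v}\onto H^2(\fg_{Z,z})$ then yields the condition defining $\fg_{Z,z}\mod_{f.g.}$. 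Your closing paragraph correctly identifies the Gulliksen/Avramov--Buchweitz circle of ideas as relevant, but the actual reduction from the paper's criterion to the finite-generation condition --- the choice of quasi-smooth thickening and Lemma \ref{l:fg} --- is the missing step.
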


\begin{rem}

Of course, this statement of the theorem is completely coordinate-free: its formulation makes no reference to $X$ or $Y$.

\end{rem}

\subsection{} For the remainder of this text, we will work in the setting of derived algebraic geometry as described e.g. in \cite{indcoh} and in Lurie's works.

We find it particularly convenient to use the language and notation from \cite{indcoh} and we will follow \emph{loc. cit.} in some abuses. 
In particular, ``scheme" will always mean ``derived scheme," ``category" will always mean ``$\infty$-category," (alias: $(\infty,1)$-category) ``DG category" will
mean ``stable $\infty$-category enriched over $k$," ``$\QCoh(X)$" will denote the DG category of quasi-coherent sheaves on $X$ and so on. 
We use the phrase ``classical scheme" for a scheme in the classical sense, i.e., a derived scheme which is locally the spectrum of a (usual) commutative ring.
We let $X^{cl}$ denote the classical scheme underlying a (derived) scheme $X$. 

If $\sC$ is a symmetric monoidal DG category, then by a ``Lie algebra in $\sC$" we understand an algebra in $\sC$ for the Lie operad. By
a ``Lie algebra," we understand a Lie algebra in $\on{Vect}$.

Functors between DG categories will always commute with finite colimits. By a \emph{continuous} functor, we understand a functor which
commutes with all colimits. For $\sC$ an $\infty$-category, we let $\Ind(\sC)$ denote its ind-completion, i.e., the $\infty$-category
obtained by freely adding all filtered colimits.

In addition, we impose the requirement that ``scheme" mean ``separated, quasi-compact (derived) scheme over $k$."

We will also change notation from above: all constructions will be assumed to be derived (so e.g. we will write $i^*$ where we wrote $Li^*$ and we
will write $T_X$ for the tangent complex of $X$ rather than $^RT_X$).

For $\sC$ an $\infty$-category and $X,Y\in\sC$, we let $\Hom_{\sC}(X,Y)$ denote the space (aliases: homotopy type, $\infty$-groupoid) of maps in $\sC$. 

\begin{rem}

One advantage of derived schemes for the above is very transparent: with the notation above, it allows us to omit the restriction that $f:X\to Y$ be flat at the expense
that the fiber product $Z:=X\times_Y\Spec(k)$ is a derived scheme. 

\end{rem}

\subsection{} Let us recall a bit of terminology which is somewhat non-standard.

If $\sC$ is a DG category with $t$-structure, we say an object $M\in\sC$ is \emph{connective} if $M\in\sC^{\leq 0}$ and \emph{coconnective} if $M\in\sC^{\geq 0}$ (here
and everywhere we use the cohomological indexing). We say $M$ is \emph{eventually connective} (alias: bounded above) if $M\in\sC^{-}$ and 
\emph{eventually coconnective} (alias: bounded below) if $M\in\sC^+$. For
such $\sC$, we let $\sC^{\heart}$ denote the heart of the $t$-structure. For example, $\QCoh(X)^{\heart}$ denotes the abelian category of quasi-coherent sheaves on $X$
(which we recall coincides with $\QCoh(X^{cl})^{\heart}$).

Let $Y$ be a closed subscheme of a scheme $X$. Recall from \cite{indschemes} that the formal completion $X_Y^{\wedge}$ is the prestack whose $S$-points
are the space of maps $S\to X$ such that $S\times_X(X\setminus Y)=\emptyset$. We say that $X$ is \emph{$Y$-adically complete} if the partially-defined
left adjoint to the inclusion $\Sch\into\on{PreStk}$ is defined on $X_Y^{\wedge}$ and the canonical map $X_Y^{\wedge}\to X$ realizes $X$ as the value of this functor
(and this map as the unit for the adjunction). If $Y\into X$ is a closed embedding of affine Noetherian DG schemes then this partially defined
left adjoint is defined on $X_Y^{\wedge}$ and we call its value the \emph{$Y$-adic completion of $X$} or the \emph{adic completion} of $X$ along $Y$.

We follow \cite{fishing} in saying that a scheme $X$ almost of finite type is \emph{quasi-smooth} if its cotangent complex is perfect of $\on{Tor}$-amplitude
$[-1,0]$, and more generally we say that $f:X\to Y$ is quasi-smooth if $f$ is almost of finite type and the relative tangent complex is perfect of $\on{Tor}$-amplitude $[-1,0]$. 
For $X$ classical, this coincides with the usual condition that $X$ is locally a complete intersection. 
Recall that quasi-smooth morphisms are Gorenstein (see Section \ref{ss:intro-indcoh} for the definition of Gorenstein morphism). We say that $X$ is 
\emph{smooth} if its tangent complex is a vector bundle,
i.e., perfect of $\on{Tor}$-amplitude 0; in this case, $X$ is classical and smooth in the classical sense.

\subsection{}\label{ss:intro-indcoh} Let us recall the format of ind-coherent sheaves from \cite{indcoh}.

For any Noetherian scheme $X$, let $\Coh(X)$ denote the full subcategory of $\QCoh(X)$ consisting of objects which are bounded (i.e., eventually connective and eventually coconnective) and
have cohomologies which are finitely generated in $\QCoh(X^{cl})^{\heart}$.
Let $\IndCoh(X)$ denote the DG category obtained as the ind-completion of $\Coh(X)$. 

There is a canonical functor $\Psi_X:\IndCoh(X)\to \QCoh(X)$ obtained as the unique
continuous functor inducing the canonical embedding $\Coh(X)\into \QCoh(X)$. We denote this functor by $\Psi$ when there is no risk for confusion. If $X$ is regular then 
$\Psi_X$ is an equivalence, but otherwise it is very non-conservative (i.e., it sends many objects to $0$). This failure for $\Psi$ to be an equivalence
exactly measures the difference between $\Coh(X)$ and $\on{Perf}(X)$ (the DG category of perfect complexes) and therefore may be understood as measuring in some sense the singularities of $X$.

$\IndCoh(X)$ carries a unique compactly generated $t$-structure such that $\Psi$
is $t$-exact. In fact, $\Psi$ induces an equivalence on eventually coconnective objects $\IndCoh(X)^+\isom \QCoh(X)^+$.

It is a relatively minor technical convenience to work with $\IndCoh(X)$ instead of $\Coh(X)$. For example, for a map $f:X\to Y$ almost of finite type, the functor $f^!$ 
is naturally defined as a functor $f^!:\IndCoh(Y)\to \IndCoh(X)$ and generally does not preserve coherent objects: for $X$ and $Y$ of almost finite type, this condition
amounts to being $\on{Tor}$-finite. Working systematically with the ind-completion allows us to ignore such technical conditions by forcing representability of functors and therefore allows 
for a more uniform treatment of the subject.

For $f:X\to Y$ a morphism of schemes, recall that $f_*:\QCoh(X)\to \QCoh(Y)$ is left $t$-exact and therefore maps eventually coconnective objects to eventually coconnective objects.
In particular, $f_*(\Coh(X))\subset\QCoh(Y)^+$. Therefore, the existence of the $t$-structure on $\IndCoh$ as above implies that there exists a unique continuous functor 
$f_*^{\IndCoh}:\IndCoh(X)\to\IndCoh(Y)$ such that $(\Psi_Y\circ f_*^{\IndCoh})|_{\Coh(X)}=f_*|_{\Coh(X)}$ as functors $\Coh(X)\to \QCoh(Y)$.
As in \cite{indcoh}, $f:X\to Y$ is eventually coconnective (e.g., $\on{Tor}$-finite or Gorenstein) if and only if $f_*^{\IndCoh}$ admits a left adjoint, which we denote $f^{*,\IndCoh}$ in this case.
By definition, $f$ is Gorenstein if and only if $f^{*,\IndCoh}$ (exists and) differs from $f^!$ by tensoring with a graded (i.e., cohomologically shifted) line bundle (where $\IndCoh(X)$ is regarded
as a $\QCoh(X)$-module category in the natural way). Note that \cite{fishing} Appendix E is a comprehensive reference for Gorenstein morphisms, though it contains more
precise results than are needed in this text: we only use the definition and the fact that quasi-smooth morphisms are Gorenstein. 

Finally, we recall from Section 9 of \cite{indcoh} that $\IndCoh$ is defined on any prestack locally almost of finite type.

\subsection{} We refer to \cite{fishing} Section 1 for basic facts about quasi-smooth schemes and to \emph{loc. cit.} Section 2 or \cite{dagx} Section 2 for
Lie algebras in derived algebraic geometry. For the background on ind-schemes we refer to \cite{indschemes} Sections 6-7 and for adic completions
we refer to \cite{dagxii}. Finally, we refer to \cite{indcoh} Sections 1, 3 and 4 for background on ind-coherent sheaves.

\subsection{} This note is structured as follows. Section \ref{s:main} contains the formulation and proof of the main result, Theorem \ref{t:indcoh}. Section \ref{s:quasi-smooth}
explains how to deduce the above description of coherent sheaves on the formal completion of a quasi-smooth DG scheme. The appendix
computes the graded Lie algebra associated to a complete intersection via the Hessian construction.

\subsection{Acknowledgements} It is a pleasure to thank Dennis Gaitsgory for many conversations and suggestions about this note and for his encouragement to publish it.


\section{Coherent sheaves}\label{s:main}


\subsection{} Let $X$ be an affine Noetherian scheme and let $i:Y\to X$ be a quasi-smooth closed
embedding such that $X$ is $Y$-adically complete and $Y$ is almost finite type over $k$. Let $X_Y^{\wedge}$ denote the
formal completion of $X$ along $Y$, which by Section \cite{indschemes} Section 6 is a prestack locally almost of finite type. 

We define the ``renormalized" form $\on{IndCoh}_{ren}(X_Y^{\wedge})$ of $\IndCoh(X_Y^{\wedge})$) to be the ind-completion of the full subcategory of 
$\on{IndCoh}(X_Y^{\wedge})$ of objects $\fF$ where $i^!(\fF)\in\on{Coh}(Y)$.

Our main result is the following:

\begin{thm}\label{t:indcoh}

There is a unique equivalence of categories:
\[
\on{IndCoh}(X)\isom \on{IndCoh}_{ren}(X_Y^{\wedge})
\]
\noindent compatible with the canonical functors to $\on{IndCoh}(X_Y^{\wedge})$.
\end{thm}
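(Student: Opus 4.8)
The plan is to build the equivalence on compact objects and then ind-extend. Write $p\colon X_Y^\wedge\to X$ for the completion map and $j\colon Y\to X_Y^\wedge$ for the canonical embedding, so that $i=p\circ j$; throughout I use the formal-completion formalism of \cite{indschemes}, in particular that $p_*^{\IndCoh}$ is fully faithful with essential image the objects supported on $Y$, and that its right adjoint $p^!$ satisfies $p_*^{\IndCoh}p^!\simeq\Gamma_Y$ (the colocalization onto this subcategory). By definition $\IndCoh(X)$ is compactly generated by $\Coh(X)$, while $\IndCoh_{ren}(X_Y^\wedge)$ is the ind-completion of the subcategory $\cC\subset\IndCoh(X_Y^\wedge)$ of objects $\fF$ with $j^!\fF\in\Coh(Y)$, so $\cC$ is its category of compact objects. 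The candidate functor $\Phi$ is the ind-extension of $p^!|_{\Coh(X)}$. The first point is that $p^!$ does carry $\Coh(X)$ into $\cC$: for $\fF\in\Coh(X)$ we have $j^!p^!\fF=i^!\fF$, and since $i$ is quasi-smooth it is Gorenstein, so $i^!\fF\simeq i^{*,\IndCoh}\fF\otimes\ell$ for a graded line bundle $\ell$ on $Y$; as $i$ has finite $\Tor$-dimension, $i^{*,\IndCoh}$ preserves coherence, whence $i^!\fF\in\Coh(Y)$ and $p^!\fF\in\cC$. Ind-extending produces a continuous $\Phi\colon\IndCoh(X)\to\IndCoh_{ren}(X_Y^\wedge)$ preserving compacts, with $\Xi\circ\Phi\simeq p^!$, where $\Xi\colon\IndCoh_{ren}(X_Y^\wedge)\to\IndCoh(X_Y^\wedge)$ is the tautological functor; this is exactly the asserted compatibility.

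The heart of the argument is a completeness lemma: because $X$ is $Y$-adically complete, every $\fG\in\Coh(X)$ is its own completion, $\fG\isom\lim_n i_{n,*}^{\IndCoh}i_n^{*}\fG$, where $i_n\colon Y_n\to X$ ranges over the infinitesimal neighborhoods of $Y$. Granting this, for any $C\in\IndCoh(X)$ supported on the open complement $U=X\setminus Y$ (so $i_n^!C=0$ for all $n$) and any such $\fG$ one gets $\Hom_X(C,\fG)=0$: indeed $\Hom_X(C,i_{n,*}^{\IndCoh}i_n^*\fG)\simeq\Hom_{Y_n}(i_n^!C,i_n^*\fG)=0$, and $\Hom$ commutes with the limit over $n$ in its second variable. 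Applying this to the colocalization triangle $\Gamma_Y\fF\to\fF\to\fF_U$, where $\fF_U$ is the pushforward to $X$ of $\fF|_U$ and is supported on $U$, shows that $\Gamma_Y\fF\to\fF$ induces an isomorphism $\Hom_X(\fF,\fG)\isom\Hom_X(\Gamma_Y\fF,\fG)$ for all $\fF,\fG\in\Coh(X)$.

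Full faithfulness of $\Phi$ on compacts is then immediate. Using that $p_*^{\IndCoh}$ is fully faithful, that $p_*^{\IndCoh}p^!\simeq\Gamma_Y$ is a colocalization, and finally the completeness lemma, we compute
\[
\Hom_{X_Y^\wedge}(p^!\fF,p^!\fG)\simeq\Hom_X(\Gamma_Y\fF,\Gamma_Y\fG)\simeq\Hom_X(\Gamma_Y\fF,\fG)\simeq\Hom_X(\fF,\fG).
\]
For essential surjectivity it suffices to see that the compact generators $\cC$ of the target lie in the image of the continuous (hence colimit-closed-image) functor $\Phi$; equivalently, that any $\fG\in\cC$ with $\Hom(p^!\fF,\fG)=0$ for all $\fF\in\Coh(X)$ vanishes. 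The same adjunction together with the completeness lemma (now trivially, as $p_*^{\IndCoh}\fG$ is supported on $Y$) gives $\Hom_{X_Y^\wedge}(p^!\fF,\fG)\simeq\Hom_X(\fF,p_*^{\IndCoh}\fG)$, so the hypothesis forces $p_*^{\IndCoh}\fG=0$ and hence $\fG=0$ by conservativity of $p_*^{\IndCoh}$. Thus $\Phi$ is an equivalence. Uniqueness follows since any functor compatible with the functors to $\IndCoh(X_Y^\wedge)$ must send $\Coh(X)$ into $\cC$ and agree there with $p^!$ (using that $\cC\hookrightarrow\IndCoh(X_Y^\wedge)$ is fully faithful), and a continuous functor out of $\IndCoh(X)$ is determined by its restriction to $\Coh(X)$.

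The step I expect to be the genuine obstacle is the completeness lemma, namely transporting the hypothesis that the \emph{scheme} $X$ is $Y$-adically complete into the sheaf-theoretic statement $\fG\isom\lim_n i_{n,*}^{\IndCoh}i_n^*\fG$. This requires comparing the scheme-level left adjoint that defines adic completeness with the behavior of $\IndCoh$ under the relevant inverse limits, controlling the interaction of $\lim_n$ with the (renormalized) $t$-structure, and verifying that the colocalization $\Gamma_Y$ behaves as expected on all of $\IndCoh(X)$ and not merely on its eventually coconnective part, where the comparison with $\QCoh$ is automatic.
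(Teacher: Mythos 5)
Your construction of $\Phi$ and the full-faithfulness step track the easy half of the paper's argument: your ``completeness lemma'' is the paper's Lemma \ref{l:right-orth}, and, contrary to your closing remark, it is not the genuine obstacle (coherence of $\fG$ gives a presentation of $\fG$ as a countable limit, with surjective structure maps, of coherent sheaves killed by powers of the ideal, and such limits are checked directly to be right-orthogonal to $j_*^{\IndCoh}\IndCoh(X\setminus Y)$). The genuine gap is in essential surjectivity, which is where all the content of the theorem lives. You reduce it to the claim that no nonzero $\fG$ in $\cC$ (the subcategory of $\IndCoh(X_Y^{\wedge})$ of objects with coherent $!$-restriction to $Y$) is right-orthogonal to $p^!(\Coh(X))$. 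This is not equivalent to essential surjectivity of $\Phi$ onto $\Ind(\cC)$: since $\Ind(\cC)$ is \emph{by definition} compactly generated by $\cC$ and $\Phi$ sends compacts to compacts, what must be shown is that the thick closure of $p^!(\Coh(X))$ inside $\cC$ is all of $\cC$, i.e.\ that every object of $\cC$ actually \emph{is} $p^!$ of a coherent sheaf on $X$. Vanishing of the right orthogonal within the compact objects does not imply this; the standard counterexample is $\on{Perf}(X)\subset\Coh(X)$ for $X$ affine and singular, where no nonzero coherent complex is right-orthogonal to $\cO_X$ and its shifts, and yet $\Ind(\on{Perf}(X))=\QCoh(X)\neq\IndCoh(X)$. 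The entire point of the ``renormalized'' category is that this distinction is nontrivial.

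Moreover, even your proof of the weak orthogonality statement is broken. You write $\Hom_{X_Y^{\wedge}}(p^!\fF,\fG)\simeq\Hom_X(\fF,p_*^{\IndCoh}\fG)$, justified ``trivially, as $p_*^{\IndCoh}\fG$ is supported on $Y$.'' But $p^!$ is the \emph{right} adjoint of $p_*^{\IndCoh}$, so what full faithfulness of $p_*^{\IndCoh}$ actually gives is $\Hom_X(\Gamma_Y\fF,p_*^{\IndCoh}\fG)$, and removing the $\Gamma_Y$ requires $p_*^{\IndCoh}\fG$ to lie in the \emph{right} orthogonal to $j_*^{\IndCoh}\IndCoh(X\setminus Y)$, whereas being supported on $Y$ only places it in the \emph{left} orthogonal: over $\bA^1$ the module $k[x,x^{-1}]/k[x]$ is supported at the origin yet receives nonzero maps from $j_*(\cO_{\bG_m})$. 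The same left/right adjoint confusion occurs in $\Hom_X(C,i_{n,*}^{\IndCoh}i_n^*\fG)\simeq\Hom_{Y_n}(i_n^!C,i_n^*\fG)$. What actually closes the gap is the paper's Proposition \ref{p:coherence}: any $\fF$ in the right orthogonal to $\IndCoh(X\setminus Y)$ with $i^!\fF\in\Coh(Y)$ is itself coherent. The paper proves this by a lower cohomological bound (Lemma \ref{l:coh-degs}), a square-zero-extension d\'evissage along the infinitesimal neighborhoods $Y_n$ showing that each $i_n^*\Psi(\fF)$ has bounded-above coherent cohomology, and then Lurie's coherent-completeness theorem (\cite{dagxii} Theorem 5.3.2) to descend coherence to $X$. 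This Grothendieck-existence-type input is absent from your argument and cannot be replaced by formal orthogonality considerations.
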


\begin{rem}

The requirement that $i$ be quasi-smooth is not so restrictive in derived algebraic geometry as in classical algebraic geometry.
Indeed, because $X$ is affine $Y$ can be infinitesimally thickened so that $i$ is quasi-smooth and then the above theorem
can be applied (the definition of the subcategory at which we renormalize having been modified by this procedure). In fact, in Section 
\ref{s:quasi-smooth} we will apply exactly this procedure to the embedding of the closed point of an adically complete quasi-smooth singularity
to deduce Theorem \ref{t:qsm}.

\end{rem}

\subsection{}\label{ss:condition} Consider the following condition for an object $\fF\in\on{IndCoh}(X)$ to satisfy:

\begin{description}

\item[$(*)$] $\fF$ is in the right orthogonal to the subcategory $\on{IndCoh}(X\setminus Y)\subset \on{IndCoh}(X)$
and $i^!(\fF)$ is coherent.

\end{description}

The main technical result in the proof of Theorem \ref{t:indcoh} is the following coherence criterion:

\begin{prop}\label{p:coherence}

$\fF$ is in $\Coh(X)$ if and only if $\fF$ satisfies $(*)$.

\end{prop}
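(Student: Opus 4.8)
The plan is to prove the two implications separately, the forward one being routine and the reverse one carrying the real content. For the forward implication, suppose $\fF\in\Coh(X)$. Since $i$ is quasi-smooth, hence Gorenstein, $i^!$ differs from $i^{*,\IndCoh}$ by tensoring with a cohomologically shifted line bundle; as $i^{*,\IndCoh}$ carries $\Coh(X)$ into $\Coh(Y)$ (the derived pullback of a bounded complex with finitely generated cohomology along a finite-$\on{Tor}$-dimension closed embedding again has finitely generated cohomology), we obtain $i^!(\fF)\in\Coh(Y)$. For the orthogonality condition I would first identify the right orthogonal to $j_*^{\IndCoh}\IndCoh(X\setminus Y)$, for $j:X\setminus Y\into X$, with the subcategory of objects that are derived complete along $Y=V(I)$; this is the standard reformulation of completeness as right orthogonality to the localization away from $Y$. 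Writing $X=\Spec(A)$ with $A$ Noetherian and $I$-adically complete, each cohomology module of $\fF$ is finitely generated, hence $I$-adically complete and so derived complete, and a bounded complex of derived complete modules is derived complete. This is the only point at which the adic completeness hypothesis enters the forward direction.

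For the reverse implication I would first reduce the abstract statement to a concrete finiteness assertion in derived commutative algebra. Using quasi-smoothness, choose $f_1,\dots,f_c\in A$ generating $I$ and exhibiting $Y$ as a Koszul-type derived zero locus, so that $i^{*,\IndCoh}(\fF)$ is computed by the derived tensor product $A/I\otimes_A\fF$ and agrees with $i^!(\fF)$ up to a line-bundle shift. Condition $(*)$ then says exactly that $\fF$ is derived $I$-complete and that $A/I\otimes_A\fF$ lies in $\Coh(Y)$. The claim becomes: if $A$ is Noetherian and $I$-adically complete, $\fF$ is derived $I$-complete, and $A/I\otimes_A\fF$ has bounded, finitely generated cohomology, then $\fF\in\Coh(X)$.

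I would establish this in two stages, boundedness first and finite generation second. For boundedness, I use that a derived $I$-complete object is recovered as the derived inverse limit $\fF\simeq\lim_n\big(A/I^n\otimes_A\fF\big)$ of its derived reductions modulo powers of $I$. Each term carries a finite filtration whose associated graded pieces are $A/I\otimes_A\fF$ tensored over $A/I$ with the finitely generated modules $I^k/I^{k+1}\cong\Sym^k(I/I^2)$; since $A/I\otimes_A\fF$ is bounded, all of these pieces lie in one cohomological window $[p,q]$ independent of $n$. As the derived inverse limit of a tower concentrated in $[p,q]$ is concentrated in $[p,q+1]$, the object $\fF$ is cohomologically bounded.

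For finite generation I would descend on cohomological degree. With $\fF$ now bounded, let $q$ be its top nonvanishing degree; right-exactness of the tensor product gives $H^q(A/I\otimes_A\fF)\cong H^q(\fF)/I\cdot H^q(\fF)$, which is finitely generated over $A/I$. As a cohomology module of a derived $I$-complete complex, $H^q(\fF)$ is itself derived complete, and $A$ is complete, so the classical topological Nakayama lemma forces $H^q(\fF)$ to be finitely generated over $A$. Peeling off $H^q(\fF)[-q]$ through the truncation triangle leaves a complex that is again derived complete, being a cofiber of derived complete objects, and still coherent after $\otimes_A A/I$; induction on the length of the amplitude then finishes the argument. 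The main obstacle I anticipate is the interaction between the $t$-structure and derived completeness: one must verify the inverse-limit presentation with the stated uniform amplitude bound and, crucially, that cohomology modules of a derived complete complex remain complete enough for Nakayama and for the truncation step to preserve completeness, since the subcategory of derived complete objects is not a priori closed under truncation. Securing these completeness-versus-truncation compatibilities, rather than the homological bookkeeping, is where the substance of the proof lies.
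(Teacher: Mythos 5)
Your forward implication is essentially the paper's: the Gorenstein twist handles $i^!(\fF)\in\Coh(Y)$, and the orthogonality is the completeness of finitely generated modules over the complete Noetherian ring (the paper's Lemma \ref{l:right-orth} phrases this via the explicit surjective inverse system rather than derived completeness, but the content is the same). For the converse your strategy --- proving the needed Grothendieck-existence-type statement directly by a tower argument plus derived Nakayama, instead of quoting \cite{dagxii} Theorem 5.3.2 and bootstrapping along square-zero extensions as the paper does --- is reasonable in outline, and the finite-generation half (cohomology modules of derived complete complexes are derived complete, topological Nakayama, descending induction on amplitude) is sound. But there is a fundamental gap: your entire argument takes place in $\QCoh(X)\simeq D(A)$, whereas $\fF$ lives in $\IndCoh(X)$, and $\Psi_X:\IndCoh(X)\to\QCoh(X)$ is \emph{not} conservative when $X$ is singular (its kernel consists of ``infinitely connective'' objects). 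At best you prove $\Psi(\fF)\in\Coh(X)$, which does not give $\fF\in\Coh(X)$ unless one first shows that $\fF$ itself is eventually coconnective, where $\Psi$ becomes an equivalence. This is exactly what the paper's Lemma \ref{l:coh-degs} and Corollary \ref{c:coh-degs} supply, via a genuinely different mechanism: the left orthogonal to $\IndCoh(X\setminus Y)$ is compactly generated by $i_*^{\IndCoh}(\Coh(Y))$, and $t$-exactness of $i_*^{\IndCoh}$ converts the bound on $i^!(\fF)$ into a bound on $\fF$. Your proposal has no substitute for this step, and without it the reverse implication is not established.

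A second, more local problem is the uniform amplitude claim for the tower $\{A/I^n\otimes_A\fF\}$. The graded pieces are the \emph{derived} tensor products $I^k/I^{k+1}\otimes_{A/I}(A/I\otimes_A\fF)$, and if $I^k/I^{k+1}$ is merely a finitely generated $A/I$-module these can have cohomology in arbitrarily negative degrees, since $A/I$ may be singular and the module may have infinite $\on{Tor}$-dimension over it; so they need not stay in a fixed window $[p,q]$, and the asserted boundedness of the limit fails as stated. The claim is salvageable precisely because $i$ is quasi-smooth: for a Koszul-regular sequence the conormal module $I/I^2$ is finite free over $A/I$, hence so is each $\Sym^k(I/I^2)$ --- equivalently, one should use the derived thickenings $Y_n=X\times_{\bA^s}\Spec\big(k[X_1,\ldots,X_s]/(X_1^n,\ldots,X_s^n)\big)$ as the paper does, whose structure sheaves are filtered with graded pieces finite free over $\cO_Y$. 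You need to invoke this freeness explicitly; ``finitely generated'' is not enough. (This also matters in the derived setting, where $X$ need not be classical and the literal ideal powers $I^n$ are not available.)
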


The proof of Proposition \ref{p:coherence} will occupy Sections \ref{ss:coherence-start}-\ref{ss:coherence-finish}.

\begin{proof}[Proof that Proposition \ref{p:coherence} $\Rightarrow$ Theorem \ref{t:indcoh}.]

By \cite{indschemes} Section 7, the natural functor $\IndCoh(X)\to\IndCoh(X_Y^{\wedge})$ is
an equivalence when restricted to the right orthogonal to $\IndCoh(X\setminus Y)$.
Therefore, Proposition \ref{p:coherence} implies that it induces an equivalence between
$\Coh(X)$ and the compact objects in $\on{IndCoh}_{ren}(X_Y^{\wedge})$ as desired.

\end{proof}

\subsection{}\label{ss:coherence-start} We begin by showing that if $\fF\in\Coh(X)$ then it satisfies $(*)$.

Because $i$ is Gorenstein, $i^!$ differs from $i^{*,\IndCoh}$ by tensoring with a graded line bundle. Therefore,
we immediately see the  for $\fF\in\Coh(X)$, $i^!(\fF)\in\Coh(Y)$.

Now the result follows from the following lemma:

\begin{lem}\label{l:right-orth}
If $\fF\in\Coh(X)$, then $\fF$ is in the right orthogonal to $\on{IndCoh}(X\setminus Y)$.
\end{lem}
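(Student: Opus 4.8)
The plan is to reinterpret the right-orthogonality condition as derived completeness along $Y$ and then prove it by d\'evissage to a standard fact in commutative algebra. Let $j:U=X\setminus Y\hookrightarrow X$ denote the open complement and $I\subset\pi_0(\cO_X)$ the ideal of $Y$. The subcategory $\IndCoh(X\setminus Y)\subset\IndCoh(X)$ is the essential image of the continuous, fully faithful functor $j^{\IndCoh}_*$, so $\fF$ lies in its right orthogonal precisely when $\Hom_{\IndCoh(X)}(j^{\IndCoh}_*\cG,\fF)=0$ for every $\cG\in\IndCoh(U)$. Since $j^{\IndCoh}_*$ is continuous and $\Coh(U)$ generates $\IndCoh(U)$ under colimits, it is enough to treat $\cG\in\Coh(U)$. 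The conceptual content I would isolate is that this right orthogonal is exactly the subcategory of objects which are derived $I$-complete (the objects $j^{\IndCoh}_*\cG$ being precisely the $I$-local ones), and that the hypothesis that $X$ is $Y$-adically complete is the assertion that $\cO_X$ is derived $I$-complete.

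First I would pass from $\IndCoh$ to $\QCoh$. Pushforward is left $t$-exact, so $j^{\IndCoh}_*$ preserves eventual coconnectivity; as $\cG\in\Coh(U)$ and $\fF\in\Coh(X)$ are bounded, both $j^{\IndCoh}_*\cG$ and $\fF$ are eventually coconnective, and the equivalence $\Psi_X\colon\IndCoh(X)^+\to\QCoh(X)^+$ identifies the mapping space with $\RHom_{\QCoh(X)}(j^{\QCoh}_*\Psi_U\cG,\Psi_X\fF)$. Writing $X=\Spec A$, this is an $\RHom$ of $A$-modules whose source is $I$-local and whose target $P:=\Psi_X\fF$ is a bounded complex with finitely generated cohomology over $\pi_0(A)$. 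As the derived $I$-complete objects form a subcategory closed under shifts and finite extensions, the Postnikov filtration of $P$ by its cohomology modules reduces the claim to showing that each finitely generated $\pi_0(A)$-module $M$ is derived $I$-complete.

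This last point is the commutative-algebra input. Over the Noetherian, $I$-adically complete ring $\pi_0(A)$, the derived completion $L\Lambda_I M$ of a finitely generated module $M$ coincides with its classical completion $\varprojlim_n M/I^nM$: the transition maps are surjective, so the tower satisfies the Mittag--Leffler condition and the higher derived inverse limits vanish. This classical completion equals $M$ because $\pi_0(A)$ is complete and $M$ is finitely generated. Hence $M$ is derived $I$-complete, and running the d\'evissage backwards yields $\Hom_{\IndCoh(X)}(j^{\IndCoh}_*\cG,\fF)=0$ for all $\cG$, which is the assertion of the lemma.

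The main obstacle is the dictionary assembled in the first paragraph: one must correctly match the categorical right orthogonal to $\IndCoh(X\setminus Y)$ with the derived $I$-complete objects, and extract derived completeness of $\cO_X$ from the prestack-level hypothesis that $X$ is $Y$-adically complete. Once this identification is in hand, the reduction to $\QCoh$, the d\'evissage along the $t$-structure, and the Mittag--Leffler computation are all routine.
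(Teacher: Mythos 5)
Your proof is correct and is essentially the paper's argument in different packaging: both reduce to $\fF\in\Coh(X)^{\heart}$ and then exploit that a finitely generated module over the complete Noetherian ring $\pi_0(A)$ is ($I$-adically, hence derived) complete, the paper phrasing this by writing $\fF$ as the countable Mittag--Leffler limit of the $I$-power-torsion modules $\fF/I^n\fF$ and using that the right orthogonal to $\IndCoh(X\setminus Y)$ contains objects set-theoretically supported on $Y$ and is closed under limits. One caution on your last step: Mittag--Leffler identifies $R\lim_n M/I^nM$ with the classical completion, but it does not by itself identify $L\Lambda_I M$ with $R\lim_n M/I^nM$ (for that one needs Artin--Rees); the cleaner route to ``$M$ is derived $I$-complete'' is the one the paper implicitly takes --- each $M/I^nM$ is derived complete because it is $I$-power torsion, and derived complete objects, being a right orthogonal, are closed under limits --- which proves what you need without ever computing $L\Lambda_I M$.
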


\begin{proof}

We immediately reduce to the case where $\fF\in\Coh(X)^{\heart}$.
In this case, it is a classical result that coherence for $\fF$ implies that $\fF$ is clasically $Y$-adically complete,
so in particular a countable filtered limit in $\QCoh(Y)^{\heart}$ with surjective structure maps of finitely generated modules
set-theoretically supported on $Y$.

Because $X$ is affine, the heart of the $t$-structure is closed under products. Therefore, any countable limit in $\IndCoh(X)$ (or $\QCoh(X)$) 
consisting of objects in the heart sits in cohomological degrees $[0,1]$. Because the structure maps in the limit above are surjective and because
the limit is countable, the limit in $\IndCoh(X)$ (or $\QCoh(X)$) sits in the heart and coincides with the limit as formed in the heart, i.e., $\fF$. 
This immediately gives the result.

\end{proof}

\subsection{} It remains to show that if $\fF\in\IndCoh(X)$ satisfies $(*)$, then $\fF\in\Coh(X)$. This will be done in Sections
\ref{ss:proof-start}-\ref{ss:coherence-finish}.

\subsection{}\label{ss:proof-start} We begin with the following.

\begin{lem}\label{l:coh-degs}

Suppose $\fF\in\IndCoh(X)$ is in the right orthogonal to $\IndCoh(X\setminus Y)$ and $i^!(\fF)$ is in cohomological degrees $\geq 0$.
Then $\fF$ is in cohomological degrees $\geq -1$.

\end{lem}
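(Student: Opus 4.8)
The plan is to play the two hypotheses off against one another: the right-orthogonality says that $\fF$ is controlled by its infinitesimal behaviour along $Y$ (it is complete, equivalently reconstructible from objects set-theoretically supported on $Y$), while the coconnectivity of $i^!(\fF)$ must be transported through the quasi-smooth structure of $i$ in order to bound the cohomology sheaves $H^n(\fF)$ from below. Since the conclusion concerns the $t$-structure and $X$ is affine, I would first reduce to controlling the lowest non-vanishing cohomology of $\fF$, i.e.\ to showing $\tau^{\le -2}\fF = 0$, and extract the needed information about $H^n(\fF)$ from the single functor $i^!$.

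The cleanest model is the codimension-one case, where it cuts to the heart of the matter. There $i_*i^!(\fF)\simeq \on{fib}(\fF\xrightarrow{f}\fF)$ for $f$ an equation of $Y$, and the condition $i^!(\fF)\ge 0$ translates directly into the statement that multiplication by $f$ is surjective on $H^n(\fF)$ for all sufficiently negative $n$ (the cokernel term is what would otherwise contribute to $i^!(\fF)$ in a negative degree); the completeness coming from right-orthogonality then kills such $f$-divisible cohomology by a derived Nakayama argument, in the same spirit as the countable-limit estimate used in Lemma \ref{l:right-orth}. The one unit of slack — why the bound is $-1$ rather than $0$ — is exactly the displacement between the kernel and cokernel contributions in this two-term complex. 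In general codimension I would try to run the same scheme through the Koszul complex of the regular sequence cutting out $Y$: then $i_*i^!(\fF)\simeq\RHom_{\cO_X}(i_*\cO_Y,\fF)$ is computed by a Koszul cochain complex whose associated spectral sequence has second page the Koszul cohomology of the individual modules $H^q(\fF)$, and the strategy is to isolate the extremal corner living in the lowest internal degree, argue that it is undisturbed by differentials, and conclude a divisibility or no-socle statement on the lowest cohomology module to which completeness can be applied.

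The step I expect to be the main obstacle is precisely this last bookkeeping, made sharp and uniform in the codimension. A naive comparison — using that $i$ is Gorenstein, so $i^!$ differs from $i^{*,\IndCoh}$ by a graded line bundle, followed by derived Nakayama — only yields a bound that degrades linearly with the codimension; and the tempting device of resolving the supported part $\Gamma_Y\fF$ (or a connective test object) by a bar construction built from $i_*(-)$ does \emph{not} converge compatibly with these connectivity estimates, so one cannot simply read off the answer term-by-term. The genuine work is therefore to identify which corner of the Koszul spectral sequence is actually protected, to check that the completeness furnished by the right-orthogonality is strong enough to run Nakayama on that corner, and to verify that passing from the constraint on $i^!(\fF)$ back to $\fF$ costs at most one cohomological degree.
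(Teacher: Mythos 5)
Your write-up correctly identifies the two forces at play (completeness of $\fF$ versus coconnectivity of $i^!\fF$ along $Y$), but as it stands it is a plan rather than a proof: the entire general-codimension case is deferred to ``the genuine work'' of a Koszul spectral-sequence analysis that you do not carry out, and the codimension-one sketch, while plausible, is not something you reduce the general case to. The paper's proof needs none of this machinery and never chooses equations for $Y$. It tests $\fF$ against an arbitrary $\fF'\in\IndCoh(X)^{<-1}$ and works entirely on the $\fF'$ side: the triangle $\Gamma_Y\fF'\to\fF'\to j_*^{\IndCoh}j^{*,\IndCoh}(\fF')$ together with the right-orthogonality of $\fF$ reduces one to computing $\Hom(\Gamma_Y\fF',\fF)$; the subcategory of objects supported on $Y$ is compactly generated by $i_*^{\IndCoh}(\Coh(Y))$ compatibly with the $t$-structure (\cite{indschemes} Proposition 7.4.5), so its connective part is generated under colimits by $i_*^{\IndCoh}(\cG)$ with $\cG\in\Coh(Y)^{<0}$; and then $\Hom(i_*^{\IndCoh}(\cG),\fF)=\Hom(\cG,i^!(\fF))=0$ by adjunction and $t$-exactness of $i_*^{\IndCoh}$, since $i^!(\fF)\in\IndCoh(Y)^{\geq 0}$. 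No Koszul complex, no Nakayama, and no bar resolution of $\Gamma_Y\fF$ --- the point is to resolve the \emph{test} object, not $\fF$ itself.

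That said, your suspicion that the bound must degrade with the codimension is well founded, and you should not try to force the uniform $\geq -1$. The one lossy step in the orthogonality argument is the assertion that $\Gamma_Y\fF'$ is in degrees $<0$ when $\fF'$ is in degrees $<-1$: since $\Gamma_Y=\on{fib}(\on{id}\to j_*^{\IndCoh}j^{*,\IndCoh})$ and $j_*$ has cohomological amplitude up to $s-1$ when $X\setminus Y$ is covered by $s$ affines, this holds only for $s=1$, and in general the argument yields $\fF\in\IndCoh(X)^{\geq -s}$. The uniform bound genuinely fails: for $X$ the two-dimensional formal disk, $Y$ its closed point and $\fF=\cO_X[2]$, one has $i^!(\fF)\simeq k$ concentrated in degree $0$ while $\fF$ sits in degree $-2$. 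The codimension-dependent bound is all that is used downstream, since Corollary \ref{c:coh-degs} needs only eventual coconnectivity; if you do complete your route, aim for that statement --- the ``protected corner'' you are worried about really does move by one degree for each equation, and that loss cannot be removed.
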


\begin{proof}

Suppose $\fF'\in\IndCoh(X)$ is in cohomological degrees $<-1$. It suffices to show $\Hom(\fF',\fF)=0$.
Let $j$ denote the open embedding $X\setminus Y\into X$. Since $\Hom(j_*^{\IndCoh}j^{*,\IndCoh}(\fF'),\fF)=0$, it suffices to show that
\[
\Hom\Big(\on{Cone}(\fF\to j_*^{\IndCoh}j^{*,\IndCoh}(\fF'))[-1],\fF\Big)=0.
\]
\noindent Because $\on{Cone}(\fF\to j^{\IndCoh}_*j^{*,\IndCoh}(\fF'))$ is in the left orthgonal to $\IndCoh(X\setminus Y)$ it suffices to assume $\fF'$ itself lies in this left orthgonal and sits in cohomological degrees $<0$.

By \cite{indschemes} Proposition 7.4.5, this left orthogonal is compactly generated by objects of the form $i_*^{\IndCoh}(\cG)$ for $\cG\in\Coh(Y)$.
Because the $t$-structure is compatible with filtered colimits and compact objects
are preserved under truncations, the intersection between this left orthogonal and $\IndCoh(X)^{<0}$ is compactly generated by objects $\cG\in\Coh(Y)^{<0}$. Therefore it
suffices to show:
\[
\Hom(i_*^{\IndCoh}(\cG),\fF)=0
\]
\noindent for $\cG\in\Coh(Y)^{<0}$. But this is clear since we assume $i^!(\fF)\in\IndCoh(Y)^{\geq 0}$ and $i_*^{\IndCoh}$ is $t$-exact.

\end{proof}

\begin{cor}\label{c:coh-degs}

Suppose $\fF\in\IndCoh(X)$ and $i^!(\fF)$ is eventually coconnective in $\IndCoh(Y)$. Then $\fF$ is eventually coconnective in $\IndCoh(X)$.

\end{cor}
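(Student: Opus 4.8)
The plan is to read this off Lemma \ref{l:coh-degs} by a single shift, keeping in force the standing hypothesis that $\fF$ lies in the right orthogonal to $\IndCoh(X\setminus Y)$ (as it does for every $\fF$ satisfying $(*)$). I would emphasize that this hypothesis is genuinely used and cannot be discarded: for $\cG\in\IndCoh(X\setminus Y)$ one has $i^!(j_*^{\IndCoh}(\cG))=0$ by base change along the disjoint square $Y\times_X(X\setminus Y)=\emptyset$, yet $j_*^{\IndCoh}(\cG)$ need not be bounded below, so a coconnective $i^!(\fF)$ alone does not force $\fF$ to be eventually coconnective. It is precisely the right-orthogonality that excludes such $j_*^{\IndCoh}$-type objects.

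The steps are then routine bookkeeping. Since $i^!(\fF)$ is eventually coconnective, fix an integer $N$ with $i^!(\fF)\in\IndCoh(Y)^{\geq N}$, and pass to the shifted object $\fF[N]$. As $i^!$ commutes with shifts, $i^!(\fF[N])\simeq (i^!\fF)[N]\in\IndCoh(Y)^{\geq 0}$; and as $\IndCoh(X\setminus Y)$ is a stable subcategory it is closed under shifts, so $\fF[N]$ again lies in the right orthogonal to $\IndCoh(X\setminus Y)$. Hence $\fF[N]$ satisfies both hypotheses of Lemma \ref{l:coh-degs}, which yields $\fF[N]\in\IndCoh(X)^{\geq -1}$, i.e. $\fF\in\IndCoh(X)^{\geq N-1}$. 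This is exactly the assertion that $\fF$ is eventually coconnective.

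I do not expect a substantive obstacle here: the whole point is that the real work — converting a lower bound on $i^!(\fF)$ into a lower bound on $\fF$ itself — was already carried out in Lemma \ref{l:coh-degs}, and the corollary only upgrades the normalization $i^!(\fF)\in\IndCoh(Y)^{\geq 0}$ to an arbitrary bound. The single point warranting care, which I would flag explicitly, is the retention of the orthogonality hypothesis; everything else is the manifestly shift-invariant behavior of $i^!$ and of the stable subcategory $\IndCoh(X\setminus Y)$.
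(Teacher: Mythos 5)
Your proof is correct and is exactly the intended deduction: the paper offers no separate argument for this corollary, since it follows from Lemma \ref{l:coh-degs} by precisely the shift you describe. Your observation that the right-orthogonality hypothesis must be carried along --- and is silently omitted from the corollary's statement, though it holds wherever the corollary is invoked, namely for $\fF$ satisfying $(*)$ --- is also correct; your $j_*^{\IndCoh}$ example (using base change along $Y\times_X(X\setminus Y)=\emptyset$) shows the statement would otherwise fail, so flagging this is a genuine, if minor, correction to the paper's phrasing.
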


\subsection{} Let $\fX$ be a prestack locally almost of finite type. We say $\cG\in\IndCoh(\fX)$ is \emph{eventually connective with
coherent cohomologies} if for any $S$ a DG scheme almost of finite type over $k$ and for any $\vph:S\to \fX$ the object $\vph^*(\cG)\in\QCoh(S)$ is eventually
connective with coherent cohomologies. Let $\QCoh_{coh}(\fX)^-$ denote the corresponding (non-cocomplete) DG category.

\subsection{}\label{ss:grothendieck-lurie} Let us recall the following result, which is a special case of \cite{dagxii} Theorem 5.3.2.

\begin{thm}\label{t:dcoh-}

The canonical functor:
\[
\QCoh(X)\to\QCoh(X_Y^{\wedge})
\]
\noindent induces an equivalence:
\[
\QCoh_{coh}(X)^-\to\QCoh_{coh}(X_Y^{\wedge})^-
\]
between eventually connective objects with coherent cohomologies. 

\end{thm}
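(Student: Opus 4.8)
The plan is to recognize this as an instance of the derived Grothendieck existence (formal GAGA) theorem and to deduce it from \cite{dagxii} Theorem 5.3.2 once the hypotheses are matched. Write $X=\Spec(A)$ for $A$ a connective Noetherian $\mathbb{E}_\infty$-algebra over $k$, and let $I\subseteq\pi_0(A)$ be the ideal cutting out $Y^{cl}\subseteq X^{cl}$. The hypothesis that $X$ is $Y$-adically complete translates into the assertion that $A$ is $I$-complete, and by \cite{indschemes} Section 6 the formal completion is the colimit $X_Y^{\wedge}=\colim_n Y_n$ of the infinitesimal thickenings of $Y$, so that $\QCoh(X_Y^{\wedge})=\lim_n\QCoh(Y_n)$ and the canonical functor carries $\fF$ to the compatible system $(i_n^*\fF)_n$ of its restrictions. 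Under the Noetherian hypothesis, $\QCoh_{coh}(-)^-$ is exactly the category of almost perfect objects (bounded above, with finitely generated cohomology in each degree), which is precisely the subcategory on which the cited theorem asserts an equivalence. Note that quasi-smoothness of $i$ plays no role here; only the Noetherian, complete, and almost finite type hypotheses enter.

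For fully faithfulness the key point is that every object of $\QCoh_{coh}(X)^-$ is already complete along $Y$, by the same mechanism as in \lemm{l:right-orth}. Indeed, an eventually connective complex $\fF$ with coherent cohomologies is the limit $\fF\simeq\lim_n\tau^{[-n,a]}\fF$ of its bounded coherent truncations; each truncation is $I$-complete because a finitely generated module over the $I$-complete Noetherian ring $\pi_0(A)$ is automatically $I$-complete, and the complete objects are stable under limits. Granting the standard identification of $\QCoh(X_Y^{\wedge})$ with the reflective subcategory of $I$-complete sheaves, the canonical functor is the completion (left adjoint to the inclusion), and the adjunction then yields, for $\fF,\fG\in\QCoh_{coh}(X)^-$,
\[
\Hom_{X_Y^{\wedge}}\big(\fF|_{X_Y^{\wedge}},\fG|_{X_Y^{\wedge}}\big)\isom\Hom_X(\fF,\widehat{\fG})=\Hom_X(\fF,\fG),
\]
the last equality because $\fG$ coincides with its own completion.

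The substance of the theorem, and the step I expect to be the main obstacle, is essential surjectivity: algebraizing a compatible system $(\fG_n)_n\in\QCoh_{coh}(X_Y^{\wedge})^-$ to a genuine almost perfect complex on $X$. Here I would argue by dévissage along the $t$-structure, presenting a bounded-above object as a limit of its truncations and reducing, through its finitely many cohomology sheaves in each truncation and the extensions between them, to the case of a single coherent sheaf in the heart. For objects of the heart the required existence is the classical Grothendieck existence theorem applied to the Noetherian complete pair $(X^{cl},Y^{cl})$. What remains is the genuinely derived difficulty: one must propagate the algebraization compatibly through the Postnikov (square-zero) filtration of $A$, controlling the relevant deformation and obstruction groups using that the homotopy groups $\pi_i(A)$ are finitely generated and $I$-complete. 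Rather than reassemble this dévissage by hand, I would invoke \cite{dagxii} Theorem 5.3.2, which executes exactly this algebraization in the generality of complete Noetherian $\mathbb{E}_\infty$-rings and thereby supplies the asserted equivalence.
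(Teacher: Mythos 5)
Your proposal ends by invoking \cite{dagxii} Theorem 5.3.2, which is exactly what the paper does: it states the result with no proof beyond the remark that it is a special case of that theorem. The additional material you supply (matching of hypotheses, the completeness argument for fully faithfulness, the dévissage sketch for essential surjectivity) is a reasonable gloss on what the cited theorem accomplishes, but the essential move is identical to the paper's.
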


Now suppose we have a sequence:
\[\xymatrix{
Y=Y_0\ar@{^(->}[r]^{\iota_0} & Y_1\ar@{^(->}[r]^{\iota_1} & Y_2\ar@{^(->}[r]^{\iota_2} & \ldots
}\]
\noindent is a sequence of closed subschemes of $X$ such that $X_Y^{\wedge}=\colim_n Y_n$ in $\on{PreStk}_{\on{Noeth}}$.
Let $i_n$ denote the closed embedding $Y_n\into X$ and let $'i_n$ denote the canonical
map $Y_n\to X_Y^{\wedge}$.

In this case, we have the (tautological) equivalence:
\[
\QCoh(X_Y^{\wedge})=\lim_n(\QCoh(Y_n))
\]
\noindent the structure maps on the are the functors $\iota_n^*$ and where the equivalence sends
$\fF\in\QCoh(X_Y^{\wedge})$ to the compatible system $\{\fF_n\}_n$ where $\fF_n:={'i_n}^*(\fF)$.

With this notation, we immediately see that the right adjoint to the canonical functor:
\[
\QCoh(X)\to\QCoh(X_Y^{\wedge})\simeq \lim_n(\QCoh(Y_n))
\]
\noindent sends a compatible system $\{\fF_n\}_n$ to the limit:
\[
\lim_n i_{n,*}(\fF_n)
\]
\noindent (where the structure maps arise via adjunction from the identification of the $*$-restriction $\fF_{n+k}$
to $Y_n$ with $\fF_n$).

In particular, Theorem \ref{t:dcoh-} gives an equivalence given by the same formula between the category $\lim_n \QCoh_{coh}(Y_n)^-$ and
$\QCoh_{coh}(X)^-$ where the notation means ``eventually connective objects with coherent cohomologies."

\subsection{}\label{ss:coh-} Now let us return to the proof of Proposition 
\ref{p:coherence}. Let us fix $\fF\in\IndCoh(X)$ satisfying $(*)$.

We claim that there exists $\{Y_n\}$ a system of almost finite type closed subschemes of $X$ as in Section \ref{ss:grothendieck-lurie} 
such that $X_Y^{\wedge}=\colim_n Y_n$ in $\on{PreStk}$ and such that $i_n^*(\Psi(\fF))$ is eventually connective with coherent cohomologies (as before, $i_n$ denotes
the embedding $Y_n\into X$).

This will be treated in Sections \ref{ss:cohminus-start}-\ref{ss:cohminus-finish}.

\subsection{}\label{ss:cohminus-start} First, assume that $i:Y\to X$ can be presented as a global complete intersection. 
That is, we assume that we have a Cartesian square:
\[\xymatrix{
Y\ar[d]\ar[r] & X\ar[d]^{f} \\
\Spec(k)\ar[r]^{0} & \mathbb{A}^s.
}\]
\noindent Then we claim that $\{Y_n\}$ can be chosen so that $Y\into Y_n$ is a composition of square-zero extensions.

Indeed, define:
\[
Y_n:=X\underset{\mathbb{A}^s}\times \Spec\big(k[X_1,\ldots,X_s]/(X_1^n,X_2^n,\ldots,X_s^n)\big).
\]
\noindent By \cite{indschemes} Section 6, we have $X_Y^{\wedge}=\colim Y_n$ as desired, and clearly $Y\into Y_n$ is a composition of square-zero extensions.

\subsection{} We continue to assume $Y\into X$ is a global complete intersection and we let $Y_n$ be as in Section \ref{ss:cohminus-start}.
Then it remains to show that $i_n^*(\Psi(\fF))$ is eventually connective with coherent cohomologies. 

Let us fix the integer $n$ for this section. Let $\cG:=i_n^*(\Psi(\fF))$ and let $\iota$ denote the embedding of $Y$ into $Y_n$.

First, note that $\iota^{*}(\cG)\in\Coh(Y)$. Indeed, this coincides with $i^*(\Psi(\fF))=\Psi(i^{\IndCoh,*}(\fF))$ which lies
in $\Coh(Y)$ because $i$ is Gorenstein (so $i^!$ differs from $i^{\IndCoh,*}$ by tensoring with a graded line bundle) and by assumption on $\fF$.

Now the claim that $\cG\in \Coh(Y_n)$ follows from the following lemma applied to $\iota:Y\into Y_n$ and $\cG$.

\begin{lem}

Let $\iota: S\into T$ be a closed embedding of Noetherian schemes which is a composition of square-zero extensions.
If $\cG\in\QCoh(T)$ satisfies $\iota^*(\cG)\in\QCoh_{coh}(S)^-$ then $\cG\in\QCoh_{coh}(T)^-$.

\end{lem}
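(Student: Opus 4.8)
The plan is to reduce to the case of a single square-zero extension and then to exploit the fiber sequence that relates $\cG$ to its $\iota$-restriction and $\iota$-pushforward.

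First I would reduce to the case where $\iota\colon S\into T$ is a single square-zero extension. Writing the composition as $S=S_0\into S_1\into\dots\into S_m=T$ with each $S_j\into S_{j+1}$ square-zero, I induct on $m$. Let $\iota'\colon S_0\into S_1$ be the first step and $\iota''\colon S_1\into T$ the rest. Then $\iota^*(\cG)={\iota'}^*({\iota''}^*(\cG))$ lies in $\QCoh_{coh}(S)^-$ by hypothesis, so the single-step case applied to $\iota'$ gives ${\iota''}^*(\cG)\in\QCoh_{coh}(S_1)^-$, and the inductive hypothesis applied to $\iota''$ gives $\cG\in\QCoh_{coh}(T)^-$. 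Thus it suffices to treat one square-zero extension, and throughout I use that for a scheme $\QCoh_{coh}(T)^-$ consists exactly of the bounded-above objects with coherent cohomology sheaves.

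Next, for a single square-zero extension $T$ is the square-zero extension of $S$ by a connective $\cO_S$-module $\cM$ (the ideal), so there is a fiber sequence of $\cO_T$-modules $\iota_*(\cM)\to\cO_T\to\iota_*(\cO_S)$. Tensoring over $\cO_T$ with $\cG$ and applying the projection formula yields the fiber sequence
\[
\iota_*\big(\cM\otimes_{\cO_S}\iota^*(\cG)\big)\to\cG\to\iota_*\iota^*(\cG).
\]
Here $\cM$ is connective, and it is coherent because $\iota_*(\cM)\simeq\on{fib}(\cO_T\to\iota_*(\cO_S))$ is the fiber of a map of coherent sheaves on the Noetherian scheme $T$ and $\iota_*$ reflects coherence along the closed embedding $\iota$.

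Finally I would check that both outer terms lie in $\QCoh_{coh}(T)^-$. Indeed $\iota^*(\cG)\in\QCoh_{coh}(S)^-$ by hypothesis, and since $\iota_*$ is $t$-exact and preserves coherence we get $\iota_*\iota^*(\cG)\in\QCoh_{coh}(T)^-$; likewise $\cM\otimes_{\cO_S}\iota^*(\cG)$ is a derived tensor product of two bounded-above objects with coherent cohomology over the Noetherian scheme $S$, hence lies in $\QCoh_{coh}(S)^-$, and so does its pushforward. As $\QCoh_{coh}(T)^-$ is closed under extensions (via the long exact cohomology sequence, using that coherent sheaves are stable under kernels, cokernels and extensions over a Noetherian scheme and that bounded-above-ness is preserved), the middle term $\cG$ lies in $\QCoh_{coh}(T)^-$, as desired. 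The main obstacle is the middle step: correctly extracting the square-zero datum $\cM$ and the fiber sequence, and in particular verifying that $\cM$ is connective and coherent so that the tensor-product estimate applies; once that fiber sequence is in hand the remainder is a formal manipulation with the $t$-structure.
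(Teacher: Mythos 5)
Your proof is correct and follows essentially the same route as the paper: reduce to a single square-zero extension, tensor the fiber sequence $\iota_*(I)\to\cO_T\to\iota_*(\cO_S)$ with $\cG$ over $\cO_T$, apply the projection formula, and conclude from the two outer terms. You have simply made explicit the details (coherence and connectivity of the ideal, closure of $\QCoh_{coh}(T)^-$ under extensions) that the paper leaves to the reader.
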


\begin{proof}

We immediately reduce to the case where $\iota$ is a square-zero extension. In this case, we have 
the exact triangle:
\[
\iota_*(I)\to \cO_T\to \iota_*(\cO_S)\overset{+1}{\to}
\]
\noindent where $I$ is a connective $\cO_S$-module with coherent cohomologies. This gives the exact triangle:
\[
\iota_*(I)\underset{\cO_T}\otimes\cG=\iota_*(I\underset{\cO_S}{\otimes}\iota^*(\cG))\to 
\cG\to \iota_*(\cO_S)\underset{\cO_T}{\otimes}\cG=\iota_*(\iota^*(\cG))\overset{+1}{\to}
\]
\noindent immediately implying the result.

\end{proof}

\subsection{}\label{ss:cohminus-finish} Now let us complete the proof of the claim from Section \ref{ss:coh-} in the general case, i.e., when $i$ is
not assumed to be a global complete intersection.

First, recall from \cite{indschemes} Section 6 that there exists some $\{Y_n\}$ with $\colim_n Y_n=X_Y^{\wedge}$. In fact, we claim that
for any such choice of $Y_n$ we have the desired property, i.e., satisfy $i_n^*(\Psi(\fF))\in \QCoh_{coh}(Y_n)^-$.

Indeed, note that this statement is \'etale local on $X$. Therefore, we can assume that $i$ is given as a complete intersection. Then, as above, we can
choose a sequence $\{Y_m'\}$ such that for each $i_m':Y_m\into X$ we have $i_m'^*(\Psi(\fF))\in\QCoh_{coh}(Y_m)^-$ and $\colim_m Y_m'=X_Y^{\wedge}$.

But since the map $i_n:Y_n\into X$ factors through $X_Y^{\wedge}$, it must factor through some $Y_m'$. Therefore, $i_n^*(\Psi(\fF))$ is obtained
by $*$-restriction from some $i_m'^*(\Psi(\fF))$ and since this lies in $\QCoh_{coh}(Y_m)^-$ we obtain the desired result.

\subsection{}\label{ss:coherence-finish} Now we deduce Proposition \ref{p:coherence}. Let $\{Y_n\}$ be as in Section \ref{ss:coh-}.

Suppose $\fF\in\IndCoh(X)$ satisfies $(*)$. Because $\fF$ lies in the right orthogonal to $\IndCoh(X\setminus Y)$, we see
that $\Psi(\fF)$ lies in the right orthogonal to $\QCoh(X\setminus Y)$ and therefore is $Y$-adically complete, i.e.,
the morphism:
\[
\Psi(\fF)\to \lim i_{n,*}i_n^*(\Psi(\fF))
\]
\noindent is an equivalence. By Section \ref{ss:coh-} we have $i_n^*(\Psi(\fF))\in \QCoh_{coh}(Y_n)^-$. Therefore, by Section \ref{ss:grothendieck-lurie}
we see that $\Psi(\fF)\in\QCoh_{coh}(X)^-$. But by Corollary \ref{c:coh-degs} we know that $\Psi(\fF)$ is eventually coconnective in $\QCoh(X)$, giving the desired result.


\section{The quasi-smooth case}\label{s:quasi-smooth}


\subsection{} In this section, we deduce Theorem \ref{t:qsm} from Theorem \ref{t:indcoh}.

\subsection{}\label{ss:lie} Let us recall some facts about Lie algebras in derived algebraic geometry.

Let $Z$ be a DG scheme almost of finite type over $k$ and let $z\in{Z}$ be a closed point. Let 
$\fg_{Z,z}$ be the corresponding Lie algebra. Then by \cite{dagx}
there is an equivalence: $\IndCoh(Z_z^{\wedge})\simeq \fg_{Z,z}\mod$ such
that the diagram:
\[\xymatrix{
\IndCoh(Z_z^{\wedge})\ar[rr]\ar[dr]^{i_z^!} && \fg_{Z,z}\mod\ar[dl] \\
& \Vect
}\]
\noindent commutes (here $\fg_{Z,z}\mod\to\Vect$ is the canonical forgetful functor).

\begin{rem}
Since $i_{z,*}^{\IndCoh}(k(z))$ is a compact generator for $\IndCoh(Z_z^{\wedge})$, this theorem amounts to a calcuation of
$\End(i_{z,*}(k(z)))$.
\end{rem}

\subsection{} Suppose $Z$ is a quasi-smooth scheme over $k$ and let $z\in{Z}$ be a closed point. Let $\widehat{Z}$ denote the
adic completion of $Z$ at $z$. As in the introduction, let $\fg_{Z,z}\mod_{f.g.}$ denote the full subcategory of $\fg_{Z,z}\mod$
consisting of objects $M$ such that $H^*(M)$ is finitely generated over $H^*(\fg_{Z,z})$, i.e., over $\Sym(H^2(\fg_{Z,z}))$.

The main result of this section is the following:

\begin{thm}\label{t:qsm2}
There is an equivalence of categories:
\[
\IndCoh(\widehat{Z})\simeq \fg_{Z,z}\mod_{ren}:=\Ind(\fg_{Z,z}\mod_{f.g.}).
\]
\end{thm}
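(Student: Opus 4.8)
The plan is to deduce Theorem~\ref{t:qsm2} from Theorem~\ref{t:indcoh} applied to $X=\widehat{Z}$ equipped with a quasi-smooth infinitesimal thickening of the reduced point $z$, and then to translate the renormalization condition ``$i^!(\fF)\in\Coh(Y)$'' into the finiteness condition defining $\fg_{Z,z}\mod_{f.g.}$ by means of the equivalence recalled in \S\ref{ss:lie}.

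First I would set up the reduction. The DG scheme $\widehat{Z}$ is affine, Noetherian and $z$-adically complete by construction. Since $Z$ is quasi-smooth, after completing at $z$ we may present it as a derived complete intersection $\widehat{Z}\simeq W\times_{\mathbb{A}^m}\{0\}$ for some $m$ (the number of defining equations), where $W$ is the completion at $z$ of a smooth scheme; such a presentation exists because $Z$ is locally the derived zero locus of a section of a vector bundle on a smooth scheme. I then set $Y:=\{z\}\times_W\widehat{Z}$. The closed embedding $i\colon Y\into\widehat{Z}$ is quasi-smooth, being the base change along the projection $\widehat{Z}\to W$ of the regular embedding $\{z\}\into W$; moreover $Y$ is almost of finite type and is set-theoretically supported at $z$, so that $\widehat{Z}$ is $Y$-adically complete and $(\widehat{Z})_Y^{\wedge}=Z_z^{\wedge}$. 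Theorem~\ref{t:indcoh} then yields
\[
\IndCoh(\widehat{Z})\simeq\Ind(\cC),\qquad \cC:=\{\fF\in\IndCoh(Z_z^{\wedge})\ :\ i^!(\fF)\in\Coh(Y)\}.
\]
By \S\ref{ss:lie} we have $\IndCoh(Z_z^{\wedge})\simeq\fg_{Z,z}\mod$ intertwining $i_z^!$ with the forgetful functor, so it remains to identify $\cC$ with $\fg_{Z,z}\mod_{f.g.}$ as full subcategories of $\fg_{Z,z}\mod$.

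The crux is to describe $i^!$ and $\Coh(Y)$ in Lie-theoretic terms. Applying \S\ref{ss:lie} also to $Y$, its tangent Lie algebra is the abelian Lie algebra $\fg_Y=T_{Y,z}[-1]\simeq k^m$ placed in cohomological degree $2$, and the morphism $Y\to Z_z^{\wedge}$ induces a map $\fg_Y\to\fg_{Z,z}$ which in degree $2$ is the canonical surjection $k^m\onto H^2(\fg_{Z,z})=\Coker(T_{W,z}\to k^m)$. Writing $i_z=i\circ j$ with $j\colon\{z\}\into Y$ and using that the equivalences of \S\ref{ss:lie} intertwine $i_z^!$ and $j^!$ with the respective forgetful functors, one checks that $i^!$ is intertwined with the restriction functor $\Res\colon\fg_{Z,z}\mod\to\fg_Y\mod$ along $\fg_Y\to\fg_{Z,z}$, up to a cohomological shift coming from the fact that $i$ is Gorenstein. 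On the other hand, $\Coh(Y)$ consists of the compact objects of $\IndCoh(Y)\simeq\fg_Y\mod=U(\fg_Y)\mod$, and since $U(\fg_Y)=\Sym(k^m)$ is a (graded) polynomial algebra these are exactly the objects whose cohomology is finitely generated over $\Sym(k^m)$. Hence $\fF\in\cC$ if and only if $H^*(M)$ is finitely generated over $\Sym(k^m)$, where $M$ is the $\fg_{Z,z}$-module corresponding to $\fF$. Finally, the action of the degree-$2$ part $\fg_{Z,z}^2=k^m$ on $H^*(M)$ factors through $H^2(\fg_{Z,z})$ since coboundaries act by zero; therefore finite generation of $H^*(M)$ over $\Sym(k^m)$ is equivalent to finite generation over $\Sym(H^2(\fg_{Z,z}))$, which is precisely the condition $M\in\fg_{Z,z}\mod_{f.g.}$. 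This gives $\cC=\fg_{Z,z}\mod_{f.g.}$ and hence $\IndCoh(\widehat{Z})\simeq\Ind(\fg_{Z,z}\mod_{f.g.})=\fg_{Z,z}\mod_{ren}$.

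I expect the main obstacle to be the crux step, namely establishing the compatibility of the equivalence of \S\ref{ss:lie} with the closed embedding $i\colon Y\into Z_z^{\wedge}$ --- i.e. that under Koszul duality $i^!$ goes over to restriction of Lie modules up to a Gorenstein shift --- and carrying out the attendant grading bookkeeping so that coherence on $Y$ matches $\Sym(H^2)$-finiteness (note that the internal grading of a finitely generated $\Sym(H^2)$-module and the cohomological grading are interchanged by the duality, which must be tracked carefully). The remaining points --- the presentation of $\widehat{Z}$, the computation of $\fg_Y$ and of the induced map $\fg_Y\to\fg_{Z,z}$, and the factoring of the $\fg_{Z,z}^2$-action through $H^2(\fg_{Z,z})$ --- are comparatively routine.
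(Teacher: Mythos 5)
Your proposal matches the paper's proof in all essentials: both reduce to a global complete intersection presentation, apply Theorem \ref{t:indcoh} to the quasi-smooth embedding of the derived self-intersection $\Spec(k)\times_{V}\Spec(k)$ (your $Y$, the paper's $\sG$) into the completion, identify $i^!$ with restriction along $\fg_{\sG,z}\to\fg_{Z,z}$ via \S\ref{ss:lie}, and conclude using the surjection $T_{V,v}\onto H^2(\fg_{Z,z})$. The one step you assert without argument --- that perfect modules over the graded polynomial algebra $\Sym(T_{V,v}[-2])$ are exactly those with finitely generated cohomology --- is precisely the paper's Lemma \ref{l:fg}, proved there by descending induction on graded homological dimension.
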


\begin{rem}

One can deduce similar equivalences for some related categories of sheaves. 

More precisely, let $\fg_{Z,z}\mod_{f.d.}$ denote the full subcategory of $\fg_{Z,z}\mod$
consisting of modules $M$ such that as a mere vector space $M$ is bounded cohomologically with finite dimensional cohomologies and let $\fg_{Z,z}\mod_{perf}$ denote the
category of $\fg_{Z,z}$-modules compact in $\fg_{Z,z}\mod$. Let $\fg_{Z,z}\mod_{f.d./perf}$ denote $\fg_{Z,z}\mod_{f.d.}\cap\fg_{Z,z}\mod_{perf}$.

Then we have equivalences $\QCoh(\widehat{Z})\simeq \Ind(\fg_{Z,z}\mod_{f.d.})$ and $\QCoh(Z_z^{\wedge})\simeq \Ind(\fg_{Z,z}\mod_{f.d./perf})$. 
Indeed, by Theorem \ref{t:qsm2} the former equivalence amounts to the claim that $\fF\in\Coh(\widehat{Z})$ lies in $\on{Perf}(\widehat{Z})$ if and only if $i_z^!(\fF)$ is bounded
with finite dimensional cohomologies. Let $\Xi$ denote the left adjoint to $\Psi$. Then the claim follows because we have: 
\[
i_z^!(\fF)=i_z^*(\Psi(\fF))\otimes i_z^!(\Xi(\cO_{\widehat{Z}}))
\]
\noindent and $i_z^!(\Xi(\cO_{\widehat{Z}}))$ is a shifted line because $\widehat{Z}$ is Gorenstein. This equivalence can also been seen directly from Lurie's theorem describing
$\fg_{Z,z}\mod$ (i.e., without appeal to Theorem \ref{t:indcoh}) by using the picture from \cite{indschemes} Section 7 realizing $\IndCoh(Z_z^{\wedge})$ as ind-coherent sheaves
on $Z$ set-theoretically supported at $z$, or less directly (but fundamentally by the same argument) by using the theory of singular support as developed in \cite{fishing}. 

These equivalences identify the following diagrams:
\[\xymatrix{
& \Ind(\fg_{Z,z}\mod_{f.d./perf})\ar[dl]\ar[dr] &&& \QCoh(Z_z^{\wedge})\ar[dl]\ar[dr]^{\Xi_{Z_z^{\wedge}}} \\
\Ind(\fg_{Z,z}\mod_{f.d.})\ar[dr] && \fg_{Z,z}\mod\ar[dl] & \QCoh(\widehat{Z})\ar[dr]^{\Xi_{\widehat{Z}}} && \IndCoh(Z_z^{\wedge})\ar[dl] \\
& \fg_{Z,z}\mod_{ren} &&& \IndCoh(\widehat{Z}).
}\]
\noindent Here $\QCoh(Z_z^{\wedge})\to\QCoh(\widehat{Z})$ is the left adjoint to the canonical $*$-restriction functor
and $\IndCoh(Z_z^{\wedge})\to\IndCoh(\widehat{Z})$ is the left adjoint to the canonical $!$-restriction functor.

\end{rem}

\subsection{} We will need the following result:

\begin{lem}\label{l:fg}

Suppose $A$ is an $E_{\infty}$-algebra (without any co/connectivity assumptions). Suppose that $H^*(A)$ is concentrated in even degrees (i.e., $A$ has
no non-vanishing odd cohomology groups) and
that the commutative algebra $H^*(A)=\oplus_{n\in\mathbb{Z}} H^{2n}(A)$ is a regular (in particular, Noetherian) ring. Finally, suppose that every graded-projective module
over $H^*(A)$ is graded-free.
Then an object $M\in A\mod$ is perfect if and only if $H^*(M)$ is finitely
generated over $H^*(A)$.

\end{lem}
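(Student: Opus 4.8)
The plan is to prove the two implications separately, with the reverse one being the substantive half. For the easy direction ($M$ perfect $\Rightarrow$ $H^*(M)$ finitely generated), I would observe that the full subcategory of $A\mod$ spanned by modules with finitely generated cohomology over $H^*(A)$ is thick: it is closed under shifts and retracts for trivial reasons, and closed under cofibers because a cofiber sequence $M'\to M\to M''$ induces a long exact sequence in cohomology, so finite generation of any two of the three forces it for the third (here one uses that $H^*(A)$ is Noetherian, so that submodules, quotients and extensions of finitely generated modules are again finitely generated). Since this subcategory contains $A$ itself, whose cohomology is the cyclic module $H^*(A)$, it contains the thick subcategory generated by $A$, i.e. all perfect objects.

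For the reverse direction, suppose $H^*(M)$ is finitely generated. I would build $M$ by attaching cells. Choosing finitely many homogeneous generators of $H^*(M)$ and lifting them to maps $A[-d_j]\to M$ produces a map $\phi_0\colon F_0:=\oplus_j A[-d_j]\to M$ from a finite free module that is surjective on all cohomology groups. Setting $K_1:=\on{fib}(\phi_0)$, surjectivity of $\phi_0$ makes the long exact sequence collapse into short exact sequences identifying $H^*(K_1)$ with $\ker\big(H^*(F_0)\to H^*(M)\big)$, which is finitely generated by Noetherianity. Iterating gives fiber sequences $K_{n+1}\to F_n\to K_n$ with $F_n$ finite free and $K_0=M$, and the induced maps on cohomology splice into an honest graded-free resolution
\[
\cdots\to H^*(F_1)\to H^*(F_0)\to H^*(M)\to 0
\]
of $H^*(M)$ as a graded $H^*(A)$-module, with $H^*(K_n)$ identified with the $n$-th syzygy. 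Here the hypothesis that $H^*(A)$ sits in even degrees is exactly what makes it an honest commutative ring, so that its graded modules may be treated by ordinary commutative algebra with no sign subtleties.

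Now I would invoke regularity: since $H^*(A)$ is regular it has finite global dimension $d$, so the $d$-th syzygy of any free resolution of a finitely generated module is projective; hence $H^*(K_d)$ is graded-projective and, by the last hypothesis, graded-free. The key remaining observation is that any $N\in A\mod$ with $H^*(N)$ graded-free is itself a free $A$-module: lifting a homogeneous basis to a map $\oplus_j A[-e_j]\to N$ gives a map that carries basis to basis, hence is an isomorphism on cohomology and so an equivalence. Thus $K_d$ is free, in particular perfect. Reading the fiber sequences as cofiber sequences $K_n\simeq\Cone(K_{n+1}\to F_n)$ and descending from $n=d$ to $n=0$, each $K_n$, and in particular $M=K_0$, is built from finitely many free modules and is therefore perfect.

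The main obstacle is the middle step: checking that the iterated fibers $K_n$ genuinely realize the syzygies, i.e. that the maps $H^*(F_{n+1})\to H^*(F_n)$ factoring through $H^*(K_{n+1})$ form an \emph{exact} complex. This is precisely where surjectivity of each $\phi_n$ on cohomology is essential, since it forces the connecting maps to vanish and turns the long exact sequences into the short exact sequences $0\to H^*(K_{n+1})\to H^*(F_n)\to H^*(K_n)\to 0$ that splice into the resolution. Once this is in place, the finite-global-dimension input and the free-cohomology-implies-free-module lemma are formal.
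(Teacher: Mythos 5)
Your proposal is correct and follows essentially the same strategy as the paper: the easy direction via thickness of the subcategory of modules with finitely generated cohomology (using Noetherianity and the long exact sequence), and the converse by killing cohomology with finite free modules, using regularity of $H^*(A)$ to guarantee that after finitely many steps the syzygy $H^*(K_d)$ is graded-projective, hence graded-free by hypothesis, hence realized by a finite free $A$-module. The paper phrases this as a descending induction on the homological dimension of $H^*(M)$ rather than assembling the full resolution up front, but the content is identical.
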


\begin{rem}
In the setting of a (non-super) commutative $\mathbb{Z}$-graded ring $B=\oplus B_{i}$ it is not so terribly uncommon for every finitely generated graded-projective 
module to be graded-free. E.g., it holds if $B$ is Noetherian, $B_i=0$ for $i<0$ (or $i>0$) and $B_0$ is a field.
\end{rem}

\begin{proof}

First, let us show that if $M$ is perfect, then $H^*(M)$ is finitely generated over $H^*(A)$. Clearly the result holds for $M=A$ and is preserved under direct summands. Therefore,
it suffices to show that if $f:M\to M'$ is a morphism where $H^*(M)$ and $H^*(M')$ are finitely generated over $H^*(A)$, then $H^*(\Cone(f))$ is finitely generated as well.
We have a short exact sequence:
\[
0\to\Coker(H^*(f))\to H^*(\Cone(f))\to \Ker(H^{*+1}(f))\to 0
\]
\noindent and therefore the result follows immediately from the Noetherian assumption.

Now suppose that $H^*(M)$ is finitely generated over $H^*(A)$: we need to show that $M$ is perfect. We will proceed by descending induction on the homological dimension of $H^*(M)$
in the category of \emph{graded} $H^*(A)$-modules (recall that if $B$ is a graded Noetherian ring which is regular as a (non-graded) ring then its category of graded modules has 
finite homological dimension as well).
Because $H^*(M)$ is a finitely generated graded module, we can choose a map $\eps:\oplus_{i=1}^n A[r_i]\to M$ inducing a surjection $\oplus_{i=1}^n H^{*+r_i}(A)\onto H^*(M)$.
Then we claim that the homological dimension of $H^*(\Cone(\eps))$ as an $H^*(A)$-module is strictly less than the homological dimension of $H^*(M)$. Indeed,
we have the short exact sequence:
\[
0\to H^*(\Cone(\eps))\to \oplus_{i=1}^n H^{*+r_i}(A)\to H^*(M)\to 0.
\]
\noindent By induction, this reduces us to the case of homological dimension zero.

In this case, by assumption the module $H^*(M)$ is graded-free. As above, we choose a basis of $H^*(M)$ to obtain 
$\oplus_{i=1}^n A[r_i]\to M$ inducing an isomorphism at the level of cohomology, i.e., such that the map is an equivalence.

\end{proof}

\subsection{} Now let us prove Theorem \ref{t:qsm2}.

The problem is Zariski local on $Z$ and therefore we may assume $Z$ is given as a global complete intersection with $U$ and $V$ smooth:
\[\xymatrix{
Z\ar[r]\ar[d] & U\ar[d] \\
\Spec(k) \ar[r]^{i_v} & V
}\]
\noindent This defines the quasi-smooth closed embedding:
\[
\vph:\sG:=\Spec(k)\underset{V}{\times}\Spec(k)\to Z.
\]

By Theorem \ref{t:indcoh} and Section \ref{ss:lie}, it suffices to show $\fF\in\IndCoh(Z_z^{\wedge})$ 
satisfies $\vph^!(\fF)\in\Coh(\sG)$ if and only if $H^*(i_z^!(\fF))$ is finitely generated over $H^*(\fg_{Z,z})$.

By Section \ref{ss:lie}, we have the equivalences 
\[
\IndCoh(Z_z^{\wedge})\isom \fg_{Z,z}\mod
\]
\[
\IndCoh(\sG_z^{\wedge})=\IndCoh(\sG)\isom \fg_{\sG,z}\mod=\Sym(T_{V,v}[-2])\mod
\]
\noindent so that the functor $\vph^!$
corresponds to the restriction along the canonical map $T_{V,v}[-2]\to \fg_{Z,z}$. Therefore, we see that
$\vph^!(\fF)\in\Coh(\sG)$ if and only if $i_z^!(\fF)$ is perfect in $T_{V,v}[-2]\mod$. By Lemma \ref{l:fg},
this is equivalent to asking that $H^*(i_z^!(\fF))$ be finitely generated over $H^*(\Sym(T_{\sG,z}[-2]))(=\Sym(T_{V,v})$
where generators sit in degree $2$).

But since $H^1(T_{\sG,z})=T_{V,v}\to H^1(T_{Z,z})$ is a surjection, we see that this condition is equivalent to asking
that $H^*(i_z^!(\fF))$ is finitely generated over $\Sym(H^1(T_{Z,z}))$, as desired.


\section{Appendix: Hessian calculations}\label{s:hessian}


\subsection{} In this appendix, we assume a good formalism of Lie algebroids in derived algebraic geometry. Such a theory has not yet been
written down but will appear in Gaitsgory-Rozenblyum \cite{algebroids}. We will state the principal constructions we need from the theory in Section \ref{ss:prereqs}.

\subsection{} Let $f:X\to Y$ be a map between smooth schemes and let $T_{X/Y}\in\on{Perf}(X)$ be its tangent complex. This is a
Lie algebroid on $X$, so in particular $T_{X/Y}[-1]$ is equipped with a natural structure of Lie algebra in $\on{QCoh}(X)$
(this follows from the formal groupoid picture for Lie algebroids by an analysis as in \cite{fishing} Section 2).

In this appendix, we will explain how to to explicitly compute the corresponding graded Lie algebra (in the ``super" sense) $H^*(T_{X/Y}[-1])$ in $\on{QCoh}(X)^{\heart}$.

More generally, suppose $S$ is a classical scheme and $x\in X(S)$. Then the symmetric monoidal functor $x^*:\QCoh(X)\to\QCoh(S)$
gives a Lie algebra structure on $x^*(T_{X/Y}[-1])$ in $\QCoh(S)$. We will compute the corresponding
graded Lie algebra $H^*(x^*(T_{X/Y}[-1]))$ in $\on{QCoh}(S)^{\heart}$.

This graded Lie algebra has non-zero graded components only in degrees $1$ and $2$ (because $S$ was assumed classical)
and therefore its bracket is encoded entirely by a map:
\[
\Sym^2(H^0(x^*(T_{X/Y})))\to H^1(x^*(T_{X/Y})).
\]
\noindent in $\on{QCoh}(S)^{\heart}$.

\subsection{} Let $df|_x$ denote the map $x^*(T_X)\to x^*f^*(T_Y)$ obtained by applying $x^*$
to the differential of $f$.

Recall that in addition to $df|_x$, we have the classical \emph{Hessian} map (alias: second derivative):
\[
\on{Hess}_x(f):\Sym^2(H^0(x^*(T_{X/Y})))=\Sym^2(\Ker(df|_x))\to \Coker(df|_x)=H^1(x^*(T_{X/Y})).
\]
\noindent (We will review the construction of the Hessian in Section \ref{ss:hessian}).

We will show the following:

\begin{prop}\label{p:hessian}
The Lie bracket $\Sym^2(H^0(x^*(T_{X/Y})))\to H^1(x^*(T_{X/Y}))$ coincides with the Hessian.
\end{prop}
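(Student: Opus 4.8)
The plan is to compute the induced bracket as a secondary cohomology operation on the explicit two-term model $[T_X\xrightarrow{df} f^*T_Y]$ (placed in cohomological degrees $1$ and $2$) for $x^*(T_{X/Y}[-1])$, and to match it against the Hessian by pinning down a single universal normalization. Both the Lie bracket and the Hessian are natural in the triple $(X,Y,f)$ and compatible with the symmetric monoidal pullback $x^*$; moreover each depends only on the $2$-jet of $f$ at the image of $x$. Accordingly I would first reduce, by naturality and working Zariski-locally, to the case $X=\bA^n$, $Y=\bA^s$, with $x$ the origin $S$-point and $f=(f_1,\ldots,f_s)$, and throughout I would only evaluate on classes $v,w$ lying in $H^0(x^*(T_{X/Y}))=\Ker(df|_x)$, exactly as in the classical Hessian. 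Note that after the shift $[-1]$ the relevant classes sit in odd cohomological degree, so graded symmetry of the bracket is what produces a map out of $\Sym^2$ rather than $\wedge^2$, matching the target statement.

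By the Lie algebroid formalism recalled in Section \ref{ss:prereqs}, the Lie algebra structure on $T_{X/Y}[-1]$ is the one integrating to the formal groupoid $\mathcal{M}=(X\times_Y X)^{\wedge}_{\Delta}\rightrightarrows X$, and I would extract the cohomology-level binary bracket from the second-order jet of the groupoid multiplication (equivalently, from the quadratic part of the differential on the Chevalley--Eilenberg complex $C^{\bullet}(T_{X/Y}[-1])$ computing functions on the formal neighborhood). Concretely, parametrizing $\mathcal{M}$ near the diagonal by $(x,\eps)$ with $\eps$ the formal displacement $x_2-x_1$, the defining condition $f(x+\eps)=f(x)$ expands as
\[
df|_x(\eps)+\tfrac12\,\on{Hess}_x(f)(\eps,\eps)+O(\eps^3)=0,
\]
so that $\Ker(df|_x)$ is the linearized fiber and the leading obstruction to its being an abelian formal vector group is precisely the quadratic term $\on{Hess}_x(f)$. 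Under Koszul duality this quadratic relation is dual to the bracket $\Sym^2(\Ker(df|_x))\to\Coker(df|_x)$, which is the content of the identification; the hypothesis $v,w\in\Ker(df|_x)$ is exactly what makes the induced operation land well-defined in $\Coker(df|_x)$ and independent of the chosen lifts, mirroring the classical fact that $\on{Hess}_x(f)$ is coordinate-independent only on the kernel of $df|_x$.

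To avoid manipulating a chain-level bracket (which is not available here), I would close with a universality argument. Both the bracket and the Hessian are natural in $f$, symmetric and bilinear in $(v,w)\in\Ker(df|_x)$, and linear in the second-order Taylor coefficients of $f$ at $x$; hence, by naturality, to identify them it suffices to test against the universal second derivative, reducing to the one-variable model $n=s=1$, $f(t)=t^2$ at the origin (together with its evident multivariable polarizations), where both sides are computed directly and seen to agree, normalization included. The main obstacle is the preceding step: rigorously matching the abstractly defined Lie algebroid bracket with the explicit second-order term of the formal groupoid, i.e. verifying that the secondary operation read off from $\mathcal{M}$ is genuinely the induced binary bracket on $H^{*}$, with the correct constant and with no contamination from higher $L_{\infty}$-operations. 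I expect this to rest on the compatibility, asserted in Section \ref{ss:prereqs}, between the Lie-algebra and formal-groupoid descriptions, together with the observation that in this two-step situation the only possibly-nonzero structure map is the binary bracket $H^{1}\otimes H^{1}\to H^{2}$.
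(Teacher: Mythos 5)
Your strategy---local coordinates, the formal groupoid $(X\times_Y X)^{\wedge}_{\Delta}$, and a normalization check on a universal example---is genuinely different from the paper's, which never passes to coordinates: there the Hessian is \emph{defined} via the filtered piece $F_2D_X/F_0D_X$ of differential operators, and the proof is a canonical diagram chase matching the filtration on the enveloping algebroid $U(T_{X/Y})$ (which, by the compatibilities of Section \ref{ss:prereqs}, computes the bracket as the boundary map of $\Cone(F_0\to F_2)$ of the Chevalley complex) against the filtration on $\Cone\big(F_2D_X/F_0D_X\to f^*(F_2D_Y/F_0D_Y)\big)[-1]$ (which computes the Hessian). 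The two constructions are thereby identified with no residual constant to pin down.

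As written, your argument has genuine gaps. First, the step you yourself flag as the main obstacle---that the cohomology-level binary bracket is exactly the quadratic term of the groupoid multiplication, with the correct constant---is the entire mathematical content of the proposition; Section \ref{ss:prereqs} hands you the bracket as a filtered piece of $C(T_{X/Y}[-1])\simeq U(T_{X/Y})$, but connecting that to your Taylor expansion of $f(x+\eps)=f(x)$ still requires precisely the diagram chase you are trying to avoid. Second, your supporting claim that ``in this two-step situation the only possibly-nonzero structure map is the binary bracket'' is false: an $n$-ary operation $l_n$ has cohomological degree $2-n$, so $l_n:(H^1)^{\otimes n}\to H^2$ is degree-permissible for every $n$---this is exactly why the formality of $\fg_{Z,z}$ for lci singularities, mentioned in the introduction, is a theorem rather than a degree count. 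This does not prevent you from extracting the binary bracket, but it removes your stated reason for there being ``no contamination.'' Third, the closing universality argument presupposes both that the induced bracket depends only on, and linearly on, the second-order Taylor coefficients of $f$ (which is essentially the statement being proved) and that the space of natural symmetric bilinear operations $\Sym^2(\Ker(df|_x))\to\Coker(df|_x)$ with this property is one-dimensional; neither is argued, and it is not clear which maps of triples $(X,Y,f)$ you are allowed to use to propagate the computation from $f(t)=t^2$ on $\bA^1$ to the general case.
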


\subsection{} Before proving the proposition, we note that this computes ``explicitly" the graded Lie algebras associated to the classical points of a global complete intersection.

Indeed, let $y\in Y$ be a closed point and let $Z$ be the derived fiber product of $f$ over $y$.
Let $z$ be an $S$-point of $Z$ where $S$ is a classical scheme. We abuse notation by denoting the corresponding $S$-point of $X$ also by $z$.
Therefore, we have the diagram:
\[\xymatrix{
S\ar[dr]^{z}\ar[drr]^{z} && \\
& Z\ar[r]^{i}\ar[d]^{p_Z} & X\ar[d]^{f} \\
& \Spec(k(y))\ar[r]^{y} & Y.
}\]
\noindent Let $z$ be an $S$-point of $Z$ where $S$ is a classical scheme (this is the same as an $S$-point of $Z^{cl}$). We abuse
notation denoting the corresponding $S$-point of $X$ also by $z$.

Observe that we have the natural morphisms:
\[
H^0(z^*(T_{Z}))\rar{z^*(di)} H^0(\on{Ker}(df|_z))\subset H^0(z^*(T_X))
\]
\[
H^0(z^*p_Z^*(T_{Y,y}))=H^0(z^*f^*(T_Y))\to H^0(\on{Coker}(df|_z))\to H^1(z^*f^*(T_Z))
\]
\noindent coming from the exact triangle:
\[\xymatrix{
z^*T_Z=z^*(T_{X/Y})\ar[r] & z^*(T_X)\ar[r] & z^*(T_Y)
}\]
\noindent by taking cohomology.

Composing these maps with $i^*(\on{Hess}(f))$, we obtain a map:
\[
\Sym^2(H^0(z^*(T_Z)))\to H^1(z^*(T_Z)).
\]
\noindent By Proposition \ref{p:hessian}, this map coincides with the Lie bracket on the graded Lie algebra $H^*(z^*(T_Z[-1]))$ in
$\on{QCoh}(S)^{\heart}$.

Indeed, this follows because symmetric monoidal functor $i^*$ induces an equivalence of Lie algebras in $\on{QCoh}(Z)$:
\[
i^*(T_{X/Y}[-1])\isom T_Z[-1].
\]

\subsection{}\label{ss:hessian} Next, we recall the construction of the Hessian $\on{Hess}_x(f)$ of the map $f:X\to Y$ for $x\in X(S)$.

Let $D_X$ denote the sheaf of differential operators on $X$ and let $F_{\cdot}$ be the usual filtration of $D_X$ by order of the differential operator,
and similarly for $Y$. Recall that there is the canonical morphism $D_X\to f^*(D_Y)$ of filtered $D_X$-modules inducing $\Sym^{\dot}(df)$
on the associated graded.

We obtain a commutative diagram in $\on{QCoh}(X)^{\heart}$ where the rows are exact:
\[\xymatrix{
0\ar[r] & T_X\ar[r]\ar[d]^{df} & F_2D_X/F_0D_X\ar[r]\ar[d] & \Sym^2(T_X)\ar[r]\ar[d]^{\Sym^2(df)} & 0 \\
0\ar[r] & f^*(T_Y)\ar[r] & f^*(F_2D_Y/F_0D_Y)\ar[r] & \Sym^2 f^*(T_Y)\ar[r] & 0.
}\]
\noindent Applying $x^*$, we obtain a similar commutative diagram in $\on{QCoh}(S)^{\heart}$:
\[\xymatrix{
0\ar[r] & x^*(T_X)\ar[r]\ar[d]^{df|_x} & x^*(F_2D_X/F_0D_X)\ar[r]\ar[d] & x^*(\Sym^2(T_X))\ar[r]\ar[d]^{\Sym^2(df|_x)} & 0 \\
0\ar[r] & x^*f^*(T_Y)\ar[r] & x^*f^*(F_2D_Y/F_0D_Y)\ar[r] & \Sym^2 x^*f^*(T_Y)\ar[r] & 0.
}\]

By definition, the Hessian is obtained by composing the boundary morphism:
\[
\on{Ker}(\Sym^2(df|_x))\to \on{Coker}(df|_x)
\]
\noindent with the tautological map:
\[
\Sym^2(\on{Ker}(df|_x))\to \on{Ker}(\Sym^2(df|_x)).
\]

\begin{rem}
Suppose $X=\mathbb{A}^n$ and $Y=\mathbb{A}^m$ and let $f=(f_1,\ldots,f_m)$. Then $\on{Hess}(f)=\on{Hess}_{\id_X}(f)$ naturally factors in the obvious way through the map:
\[
\Sym^2(T_X)=\Sym^2(\cO_X[\partial_1,\ldots,\partial_n])\to f^*(T_Y)=\cO_X[\partial_1',\ldots,\partial_m']
\]
\noindent defined by the classical formula:
\[
\partial_i\cdot\partial_j\mapsto \sum_{k=1}^m \frac{\partial^2 f_j}{\partial x_j\partial x_i}\cdot\partial_j'.
\]
\end{rem}

\begin{rem}

We note that the Hessian may be also be defined via the dual description of infinitesimal neighborhoods of the diagonal in place of differential operators.

\end{rem}

\subsection{}\label{ss:prereqs} Proposition \ref{p:hessian} is based on two ``standard" compatibilities (which probably lack a reference presently).

First, suppose that $L$ is a Lie algebra in a ``nice" symmetric monoidal DG category $\mathscr{C}$ (here ``nice" means
admitting all colimits and the tensor product commutes with colimits in each variable). Let $C(L)$ be the (homological) Chevalley
complex of $L$ and let $F_{\cdot}$ denote the canonical filtration on $C(L)$. Then $\on{Gr}_i^F(C(L))$ canonically identifies
with $\Sym^i(L[1])$, so in particular we obtain a canonical map:
\[
\Lambda^2(L):=\Sym^2(L[1])[-2]\to L
\]
\noindent as from the corresponding filtration of $\Cone\big(F_0C(L)\to F_2C(L)\big)$. This map coincides (i.e., is homotopical) with
the Lie bracket for $L$.

Second, let $T$ be a scheme and let $L$ be an algebroid in $\on{Perf}(T)\subset\on{QCoh}(T)$.
Then $L[-1]$ carries a natural structure of Lie algebra in $\on{QCoh}(T)$.
With respect to this structure, the Chevalley complex $C(L[-1])$ naturally coincides as a filtered coalgebra
in $\on{QCoh}(T)$ with the enveloping algebroid $U(L)$.

\subsection{} With these preliminaries, Proposition \ref{p:hessian} follows by the following diagram chase.

Let $U(T_{X/Y})$ denote the enveloping algebroid of $T_{X/Y}$, which we regard as a mere filtered object of $\on{QCoh}(X)$.
We have a commutative diagram where all maps are compatible with filtrations and $\epsilon$ is the augmentation:
\[\xymatrix{
U(T_{X/Y})\ar[r]\ar[d]^{\epsilon} & U(T_X)\ar@{=}[r]\ar[d] & D_X\ar[d] \\
\cO_X\ar[r] & f^*(U(T_Y))\ar@{=}[r] & f^*(D_Y).
}\]
\noindent Indeed, it suffices to show that $\cO_X\to f^*(D_Y)$ is a morphism of $T_{X/Y}$-modules. To see this,
note that $\cO_Y\to D_Y$ is a map of modules for the trivial algebroid on $\cO_Y$ and
that $T_{X/Y}$ is the algebroid pull-back of the trivial algebroid. 

In particular, the natural morphism: 
\[
\Cone\big(\cO_X=F_0U(T_{X/Y})\to U(T_{X/Y})\big)\to f^*(D_Y/F_0D_Y)
\]
\noindent is canonically trivialized and therefore we obtain a natural filtered map:
\[
\Cone(F_0U(T_{X/Y})\to U(T_{X/Y})\big)\to\Cone\big(D_X/F_0D_X\to f^*(D_Y/F_0D_Y)\big)[-1].
\]

Thus, we obtain the commutative diagram between exact triangles:
\[\xymatrix{
T_{X/Y}\ar[r]\ar[d]^{\on{id}} & \Cone(F_0U(T_{X/Y})\to F_2U(T_{X/Y})\big)\ar[d]\ar[r] & \Sym^2(T_{X/Y}) \ar[d] \\
T_{X/Y}\ar[r] & \Cone\big(F_2D_X/F_0D_X\to f^*(F_2D_Y/F_0D_Y)\big)[-1] \ar[r] &
\Cone\big(\Sym^2(T_X)\rar{\Sym^2(df)} \Sym^2(f^*(T_Y))\big)[-1]
}\]
\noindent We deduce a natural commutative diagram:
\[\xymatrix{
\Sym^2(T_{X/Y})\ar[r]\ar[d] & T_{X/Y}[1]\ar[d]^{\on{id}} \\
\Cone\big(\Sym^2(T_X)\rar{\Sym^2(df)} \Sym^2(f^*(T_Y))\big)[-1] \ar[r] & T_{X/Y}[1].
}\]
\noindent By Section \ref{ss:prereqs}, the composition is equal (i.e., homotopic) to the Lie bracket.

Applying $x^*$ and passing to cohomology, we obtain the commutative diagram:
\[\xymatrix{
\Sym^2(H^0(x^*(T_{X/Y})))\ar[r]\ar[d]^{\simeq} & H^0(\Sym^2(x^*(T_{X/Y})))\ar[r]\ar[d] & H^1(x^*(T_{X/Y}))\ar[d]^{\simeq} \\
\Sym^2(\on{Ker}(df|_x))\ar[r] & \on{Ker}(\Sym^2(df|_x))\ar[r] & \on{Coker}(df|_x) \\
}\]
\noindent By definition, the composition of the bottom row computes $\on{Hess}_x(f)$.
Moreover, as above, the composition of the top row computes the Lie bracket at the level
of cohomology. This completes the computation.

\end{document}